\renewcommand\eqref[1]{(\ref{#1})} 
 \newtheorem{theorem}{Theorem}[section]
 \newtheorem{cor}[theorem]{Corollary}
 \newtheorem{lemma}[theorem]{Lemma}
 \newtheorem{proposition}[theorem]{Proposition}
 \theoremstyle{definition}
 \newtheorem{defn}[theorem]{Definition}
 \theoremstyle{remark}
 \newtheorem{rem}[theorem]{Remark}
 \newtheorem{remark}[theorem]{Remark}
 \numberwithin{equation}{section}
 \newcommand{\half}{\frac{1}{2}}
\newcommand{\ene}{\mathbb{N}}
\newcommand{\ar}{\mathbb{R}}
\newcommand{\er}{\mathbb{R}}
\newcommand{\To}{\mathbb{T}}
\newcommand{\arn}{{\mathbb{R}}^n}
\newcommand{\Rn}{{\mathbb{R}}^n}
\newcommand{\partvv}{\frac{\partial ^2 v}{\partial t^2}}
\newcommand{\partu}{\frac{\partial u}{\partial t}}
\newcommand{\partv}{\frac{\partial v}{\partial t}}
\newcommand{\partuu}{\frac{\partial ^2 u}{\partial t^2}}
\newcommand{\partt}{\frac{\partial}{\partial t}}
\newcommand{\bi}{\begin{itemize}}
\newcommand{\ei}{\end{itemize}}
\newcommand{\be}{\begin{enumerate}}
\newcommand{\ee}{\end{enumerate}}
\newcommand{\beq}{\begin{equation}}
\newcommand{\eq}{\end{equation}}
\newcommand{\zet}{\mathbb{Z}}
\newcommand{\Co}{\mathbb{C}}
\newcommand{\zn}{\mathbb{Z}^n}
\newcommand{\tn}{\mathbb{T}^n}
\newcommand{\Tn}{\mathbb{T}^n}
\newcommand{\fou}{\mathcal{F}}
\newcommand{\ese}{\mathcal{S}}
\newcommand{\jpxi}{\langle \xi \rangle}
\def\Op{\text{\rm Op}}
\begin{document}

\title[Well-posedness for a class of pseudo-differential hyperbolic  equations]{Well-posedness for a class of pseudo-differential hyperbolic equations on the torus}

 \author[D. Cardona]{Duv\'an Cardona}
\address{
 Duv\'an Cardona:
  \endgraf
  Department of Mathematics: Analysis, Logic and Discrete Mathematics
  \endgraf
  Ghent University, Belgium
  \endgraf
  {\it E-mail address} {\rm duvan.cardonasanchez@ugent.be}
  }
 \author[J. Delgado]{Julio Delgado}
\address{
  Julio Delgado:
  \endgraf
  Departmento de Matematicas
  \endgraf
  Universidad del Valle
  \endgraf
  Cali-Colombia
  \endgraf
    {\it E-mail address} {\rm delgado.julio@correounivalle.edu.co}}
  
\author[M. Ruzhansky]{Michael Ruzhansky}
\address{
  Michael Ruzhansky:
  \endgraf
  Department of Mathematics: Analysis, Logic and Discrete Mathematics
  \endgraf
  Ghent University, Belgium
  \endgraf
 and
  \endgraf
  School of Mathematical Sciences
  \endgraf
  Queen Mary University of London
  \endgraf
  United Kingdom
  \endgraf
  {\it E-mail address} {\rm michael.ruzhansky@ugent.be}
  }

\thanks{The authors are supported by the FWO Odysseus 1 grant G.0H94.18N:Analysis and Partial Differential Equations and by the Methusalem programme of the Ghent University Special Research Fund(BOF)(Grant number 01M01021). D. Cardona is also supported
by the Research Foundation-Flanders (FWO) under the postdoctoral grant No 1204824N. J. Delgado is also supported by Vice. Inv. Universidad del Valle Grant CI 71329, MathAmSud and Minciencias-Colombia under the project MATHAMSUD21-MATH-03. Michael Ruzhansky is also supported by EPSRC grants EP/R003025/2 and EP/V005529/1.}

\subjclass[2020]{Primary {35S10, 35S05; Secondary 35R11, 35A01}.}

\keywords{Fractional hyperbolic equations,   periodic pseudo-differential operators, energy estimates, microlocal analysis.}

\date{\today}

\begin{abstract}In this paper we establish the well-posedness of the Cauchy problem for a class of pseudo-differential hyperbolic equations on the torus. The class considered here includes a space-like fractional order 
 Laplacians. By applying the toroidal pseudo-differential calculus 
 we establish  regularity  estimates, existence and uniqueness in the scale of the  standard Sobolev spaces on the torus.\end{abstract}

\maketitle
\tableofcontents
\allowdisplaybreaks
\section{Introduction}

\subsection{Outline}
In this work we analyse the well-posedness for  hyperbolic periodic problems associated to positive elliptic pseudo-differential operators on the torus $\tn\cong [0,1)^n$. Although the analysis carried out here can be extended to general compact Lie groups, here we concentrate our attention to the commutative case, namely, the case of the torus, due to the simplified formulation of the pseudo-differential calculus in this context, established in terms of the periodic Fourier analysis as developed in \cite{rt:torus,rt:book}. The case of general compact Lie groups will be considered in a subsequent work with the periodic case as a fundamental model.%

To precise the notation,  let $P(x,D)$ be an elliptic positive pseudo-differential operator of order $\nu>0$ on the torus.  In terms of the periodic pseudo-differential calculus developed in \cite{rt:torus,rt:book}, to the operator $P(x,D)$  one  can associate a  symbol $p:=p(x,\xi)$ globally defined on the phase space $\tn\times \zn,$ allowing the integral representation
\begin{align}\label{toroidal:quantisation}
    P(x,D)f(x)=\sum_{\xi\in \zn }\int\limits_{ \tn}e^{2\pi i (x-y)\cdot \xi} p(x,\xi)f(y)dy,\,f\in C^\infty(\tn).
\end{align}We have used the notation $ x\cdot \xi=x_1\xi_1+\cdots +x_n\xi_n$ for the standard inner product.
In this setting the ellipticity condition means that the  symbol satisfies the estimate
\begin{equation}\label{ellipticity:intro}
    C_1 \lvert\xi \rvert^\nu \leq  \lvert p(x,\xi) \rvert \leq C_2 \lvert \xi \rvert^\nu, \,\xi\in \zn, \xi\neq 0,\,x\in \tn,
\end{equation}  for some positive constants $C_1,C_2>0.$ Then, with the notation above, for a given time $T>0,$ our main goal is to establish the well-posedness for the following Cauchy problem : 
\begin{equation}
 \left\{
\begin{array}{rl}
{\displaystyle \partvv} =&-P(x,D)v+\omega, \,\, \mbox{ (in the sense of   }\mathcal{D}'(]0,T[\times\mathbb{T}^n))\\
{\displaystyle v}(0)=&f,\\
{\displaystyle \partv}(0)=&g,\\
\end{array} \right.
\label{probei2gax}\end{equation}
where  $f\in H^{s}(\tn),\, g\in  H^{s-\frac{\nu}{2}}(\tn),$  and $\omega\in L^2([0,T], H^{s-\frac{\nu}{2}}(\tn))$,  for some $s\in\ar$.
We clarify the contributions of this note in Theorem \ref{main2:2} below,   but first, we discuss the main differences between the  Euclidean diffusion operators and the ones in the periodic setting. For instance, a first look at the models of the form \eqref{probei2gax} could give the impression that their analysis can be carried out by the standard periodisation techniques, however, this is not the case as we will explain in the simplest case of the fractional Laplacian on the torus. Then, the situation changes dramatically if one considers non-local perturbations of this operator even of lower order as well as perturbations with variable coefficients.  For the main aspects about the periodisation of Euclidean models we refer the reader to \cite{rt:torus,rt:book}.

\subsection{Periodic vs Euclidean models}\label{discussion}
Our setting includes the case of the fractional Laplacian $P(x,D)=(-\Delta)^{\frac{\nu}{2}}.$ Let us observe that the toroidal quantization formula in \eqref{toroidal:quantisation} and the pseudo-differential calculus developed in \cite{rt:torus,rt:book} will allow us to consider the diffusion operator $(-\Delta)^{\frac{\nu}{2}}$ as a pseudo-differential operator for any  $\nu>0.$ Indeed, the functional calculus for the Laplacian on the torus implies the integral  formula
\begin{align}\label{toroidal:fraclap}
    (-\Delta)^{\frac{\nu}{2}}f(x)=\sum_{\xi\in \zn }\int_{ \tn}e^{2\pi i (x-y)\cdot \xi} (2\pi)^{\nu} \lvert\xi \rvert^\nu f(y)dy,\,f\in C^\infty(\tn).
\end{align} From this formula one can deduce the non-locality of the operator if e.g. $\nu\notin 2\mathbb{N}_0.$ This ``global pseudo-differential representation''
 is not available for the fractional Laplacian on $\Rn$, where it is only a pseudo-diferential operator when $\nu$ is an even integer, due to the lack of smoothness of the symbol at the origin.

One reason to emphasize about the role of the fractional Laplacian on the torus as a crucial model in the analysis of the hyperbolic problems as in \eqref{probei2gax} came from the following fact. One may think that the analysis of non-local operators like the power $(-\Delta)^{\frac{\nu}{2}}$ can be obtained by periodisation techniques of
the results for the fractional Laplacian on $\arn$. However, the analysis of periodic models changes dramatically due to the non-locality of the operators. Another obstruction came with the structure of distributions on the torus. Indeed, observe that the class of  smooth functions, as well as the class of $L^p$-functions on $\tn $, cannot be identified with the Schwartz class or with the $L^p$-spaces on $\arn,$ respectively, even with assumptions of extensions from the torus to the real line under the periodicity condition.

On the other hand, a more distinctive fact arises with the behavior of the singularities of conormal distributions. Indeed,  the kernel of the fractional Laplacian on the Euclidean space is not an integrable function. Then, as it was pointed out in  \cite{ron:t},  its “periodisation”
in principle has just a formal meaning. It is not clear that the fractional Laplacian acting on
periodic functions coincides with the fractional
Laplacian on the torus as defined as a Fourier multiplier of Fourier series. We refer the reader to Roncal and Stinga \cite{ron:t} for the analysis of the fractional Laplacian on the torus and for its properties in comparison with the ones of the fractional Laplacian on the Euclidean space.

\subsection{State-of-the-art} 
The research in evolution equations governed by the fractional Laplacian and other elliptic pseudo-differential operators  on different structures  has been intensive in the last decades. However the literature on the fractional hyperbolic case is less known compared with the ones  involving models of fractional diffusion.  Fractional hyperbolic equations have been treated for instance in  \cite{del:deghyp1} within the framework of Weyl-H\"ormander calculus. We refer the reader to Remark \ref{extension:remark} for a discussion about hyperbolic problems for  pseudo-differential terms in the Euclidean space as well as in the periodic setting.  For the analysis of boundary value problems for the Euclidean fractional Laplacian we refer to Grubb \cite{GGr:aq}.

On the other hand, the wide variety of applications of the fractional diffusion is spread throughout fluid mechanics \cite{pc:lf}, \cite{mb:lf}, mathematical finance  \cite{c:ff},  fractional dynamics \cite{skb:kk}, \cite{nl:aw},  strange kinetics and anomalous transport, see  \cite{Kiselev:Nazarov},\cite{dab:geo},\cite{caf:annals} and the references therein. Also, drift-diffusion equations with fractional diffusion  have intensely attracted the interest during  the last 12 years starting with the works of  Caffarelli and Vasseur in \cite{caf:annals} and their subsequent developments.
In a particular but crucial setting, the fractional Laplacian, which is the model of the operators considered in this work, is an interesting object in its own, and on the torus it has been studied in  \cite{ron:t}. Several aspects of the harmonic analysis of the fractional Laplacian on lattices also have been investigated in \cite{ron:di} and in   \cite{ron:ddf}. Recent works on the fractional Laplacian and its different generalizations can be found in \cite{rup:f1}, \cite{Cao:hy}, \cite{Uhlm:fl}, \cite{GGr:aq}, \cite{val:p2}, \cite{val:p3} and the references therein.   An accessible  presentation of the fractional Sobolev spaces and the  fractional Laplacian can be found in \cite{val:p1}.

\subsection{Main result  and organisation of the paper}
We will apply the point of view of the global quantisation for pseudo-differential operators 
 on the torus as developed in  \cite{rt:torus,rt:book}. In order to present our main result we will employ the following additional notation.
 \begin{itemize}
\item In the periodic hyperbolic model \eqref{probei2gax}, the symbol $p:=p(x,\xi)$ of the operator will be considered in the H\"ormander classes $S^\nu_{\rho,\delta}(\tn\times \zn),$ see Section \ref{Preliminaries} for  the definition of these classes. 
\item The class of pseudo-differential operators $\Psi^\nu_{\rho,\delta}(\tn\times \zn)$ denotes the family of pseudo-differential operators with symbols in the toroidal class $S^\nu_{\rho,\delta}(\tn\times \zn).$ We refer to this class as the toroidal/periodic $(\rho,\delta)$-H\"ormander class of order $\nu.$ 
 \item We will denote by $H^s(\tn)$ the standard Sobolev space of order $s\in \ar,$ on the torus $\tn,$ see Remark \ref{Remark:Sob} for details.
 \end{itemize}
Our main theorem can be stated in the following way.
 \begin{theorem}\label{main2:2}  Let $\nu>0$, a given time $T>0$,   and let $0\leq \delta<\rho\leq 1$. Let $s\in\ar$, $f_0\in H^s(\tn),\,f_1\in H^{s-\frac{\nu}{2}}(\tn)$ and let $\omega\in L^2([0,T], H^{s-\frac{\nu}{2}}(\tn))$. Let $P(x,D)\in \Psi^\nu_{\rho,\delta}(\tn\times \zn)$ be a positive elliptic pseudo-differential operator in the $(\rho,\delta)$-H\"ormander class of order $\nu$.

Then, there exists a unique
 solution $u\in C([0,T], H^s(\tn))$ of the Cauchy problem
 \begin{equation}
 \left\{
\begin{array}{rl}
{\displaystyle\partuu }\,\,\,\,=& -P(x,D)u+w, \,\, \mbox{ (in the sense of }\mathcal{D}'(]0,T[\times\Tn))\\
{\displaystyle u}(0)=&f_0,\\
{\displaystyle\partu}(0)=& f_1,
\end{array} \right.
\label{probgg2}\end{equation}  
satisfying the following energy estimate 
\begin{equation}\label{einw1:1c}
    \Vert u(t)\Vert_{H^s(\tn)}^2\leq Ce^{Ct}\left(\Vert f_0\Vert_{H^s(\tn)}^2+\Vert f_1\Vert_{H^{s-\frac{\nu}{2}}(\tn) }^2+\int\limits_0^t\Vert w(\tau)\Vert^2_{H^{s-\frac{\nu}{2}}(\tn)}d\tau\right).
\end{equation}
Consequently, if $w\in C^{\infty}([0,T]\times \tn)$ and every  $f_i\in C^{\infty}(\tn)$ is smooth,
   then the solution $u$  belongs to the class $C^{\infty}([0,T]\times \tn)$.
\end{theorem}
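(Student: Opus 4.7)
The plan is to reduce the second-order equation to a first-order pseudo-differential system via a square root of $P(x,D)$, derive \eqref{einw1:1c} by a Gr\"onwall argument, and then read off existence, uniqueness, and smoothness. For the square root, using positivity and ellipticity of $P$ combined with the asymptotic solvability afforded by $\delta<\rho$ in the toroidal H\"ormander calculus of \cite{rt:torus,rt:book}, I would construct $\Lambda\in\Psi^{\nu/2}_{\rho,\delta}(\tn\times\zn)$ with principal symbol $p(x,\xi)^{1/2}$ satisfying $\Lambda^2=P$ and $\Lambda=\Lambda^*$ modulo smoothing operators. An iterative symbolic construction---solving $q_0^2=p$ at the principal level, then adding lower-order corrections $q_{-1},q_{-2},\dots$ and symmetrizing at each stage---produces such a $\Lambda$; a harmless cut-off near $\xi=0$ takes care of the zero mode.

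Setting $V=(\Lambda u,\partial_t u)^\top$, problem \eqref{probgg2} becomes, modulo a smoothing remainder,
\begin{equation*}
\partial_t V = \mathcal{A} V + \mathcal{F}, \qquad
\mathcal{A}=\begin{pmatrix} 0 & \Lambda \\ -\Lambda & 0 \end{pmatrix}, \qquad
\mathcal{F}=\begin{pmatrix} 0 \\ w \end{pmatrix},\qquad V(0)=(\Lambda f_0,f_1)^\top,
\end{equation*}
whose initial datum lies in $H^{s-\nu/2}(\tn)\oplus H^{s-\nu/2}(\tn)$ by the ellipticity of $\Lambda$. The natural energy $E(t):=\|V(t)\|_{H^{s-\nu/2}(\tn)\oplus H^{s-\nu/2}(\tn)}^2$ is, again by ellipticity of $\Lambda$, equivalent to $\|u(t)\|_{H^s(\tn)}^2+\|\partial_t u(t)\|_{H^{s-\nu/2}(\tn)}^2$. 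Differentiating $E(t)$ and commuting the weight $\langle D\rangle^{s-\nu/2}$ past $\Lambda$, the principal off-diagonal contributions of $\mathcal{A}$ cancel thanks to the near-self-adjointness of $\Lambda$; the remainders produced by $\Lambda-\Lambda^*$ and by $[\langle D\rangle^{s-\nu/2},\Lambda]$ both gain a factor $\rho-\delta$ in order through the composition rule of the toroidal calculus, so they are $L^2$-bounded on the relevant Sobolev scale by the $L^2$-boundedness theorem of \cite{rt:torus,rt:book}. Combining with $|\langle w,V_2\rangle_{H^{s-\nu/2}}|\lesssim\|w\|_{H^{s-\nu/2}}^2+E(t)$, we obtain $E'(t)\leq CE(t)+C\|w(t)\|_{H^{s-\nu/2}(\tn)}^2$, and Gr\"onwall's inequality yields \eqref{einw1:1c}.

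Existence follows from a Galerkin scheme: truncate the Fourier series at $|\xi|\leq N$, solve the resulting finite-dimensional linear ODE in the Fourier coefficients of $u$, and extract a weak-$*$ limit $u\in C([0,T],H^s(\tn))\cap C^1([0,T],H^{s-\nu/2}(\tn))$ using the uniform $E$-bound. Uniqueness is immediate by applying \eqref{einw1:1c} to the difference of two candidate solutions with zero data and zero source. When $f_0,f_1,w$ are smooth, the estimate holds for every $s\in\ar$, so $u(t)\in\bigcap_s H^s(\tn)=C^\infty(\tn)$ uniformly in $t$, and passing time-derivatives to spatial ones via the equation inductively upgrades this to $u\in C^\infty([0,T]\times\tn)$. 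The main technical obstacle is the square-root construction together with the commutator analysis: $\Lambda$ must be chosen so that both $\Lambda-\Lambda^*$ and its commutators with Sobolev weights lose enough order to remain $L^2$-bounded, which is exactly where the strict inequality $\rho>\delta$ is crucial throughout the toroidal calculus.
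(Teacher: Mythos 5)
Your proposal follows the same overall route as the paper: reduce \eqref{probgg2} to a first-order $2\times2$ pseudo-differential system by introducing an operator of order $\nu/2$ built from $P$, derive the energy estimate by differentiating the weighted $L^2$-norm of the vector unknown and exploiting that the system matrix plus its adjoint has order zero, and conclude by Gr\"onwall. The differences lie in two sub-steps, and one of them hides a gap. For the square root, the paper takes the \emph{exact} operator $A=(I+P)^{1/2}$ from the functional calculus of the positive, invertible operator $I+P$, works with $v_1=Au$ and $K=\left(\begin{smallmatrix}0&A\\-PA^{-1}&0\end{smallmatrix}\right)$, and the identity $A^*A=I+P$ gives $A-PA^{-1}=A^{-1}\in\Op S^{-\nu/2}_{\rho,\delta}$, so $K+K^*$ has order zero with no symbolic iteration at all. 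Your $\Lambda$ satisfies $\Lambda^2=P$ and $\Lambda=\Lambda^*$ only modulo smoothing and is not invertible (you even cut off near $\xi=0$), so the asserted equivalence $\|\Lambda u\|_{H^{s-\nu/2}}\simeq\|u\|_{H^s}$ is false as stated: ellipticity only yields $\|u\|_{H^s}\lesssim\|\Lambda u\|_{H^{s-\nu/2}}+\|u\|_{H^{-N}}$, and recovering $u$ (hence the initial condition $u(0)=f_0$ and the uniqueness of $u$ itself) from $V_1=\Lambda u$ requires an honest inverse, not a parametrix. The fix is exactly the paper's choice: shift to $I+P$ before taking the square root. For existence, your Galerkin scheme is a legitimate alternative to the paper's duality/Hahn--Banach argument (adapted from Chazarain--Piriou); it is more self-contained, provided you verify that the truncated systems satisfy the energy estimate uniformly in the truncation parameter, which does hold because the Fourier projections are self-adjoint and commute with $\langle D\rangle^{t}$. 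Finally, one claim you should make precise: $[\langle D\rangle^{s-\nu/2},\Lambda]$ has order $s-(\rho-\delta)$ and $\Lambda-\Lambda^*$ has order $\nu/2-(\rho-\delta)$, which are respectively $\leq s-\nu/2$ and $\leq 0$ only when $\nu\leq 2(\rho-\delta)$; for larger $\nu$ the statement that these remainders are ``$L^2$-bounded on the relevant Sobolev scale'' does not follow from a single application of the composition rule, and the same care is needed in the commutator step of Theorem \ref{order1}.
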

\begin{rem}\label{extension:remark} Fractional hyperbolic equations have been treated for instance in  \cite{del:deghyp1}, where in particular a version of Theorem \ref{main2:2} has been proved within the framework of the Weyl-H\"ormander calculus. In particular, such a result provides the well-posedness for hyperbolic equations in the Euclidean H\"ormander classes. However, in view of the discussion above,  our main Theorem \ref{main2:2} cannot be recovered from its Euclidean analogue just by periodisation techniques, see Subsection \ref{discussion} for details.     
\end{rem}
\begin{rem}
    In order to prove the energy estimate in Theorem \ref{main2:2}, the hyperbolic problem is simplified to a vector-valued diffusion model. In this context, we will employ a matrix-valued version of the toroidal calculus  developed in  \cite{rt:torus,rt:book}, see Subsection \ref{matrix:values:classes} for details. The energy estimate obtained from Theorem \ref{main2:2} applied to the fractional Laplacian on the  torus can be found in Corollary \ref{main2}.  In this case we use the property $ (-\Delta)^{\frac{\nu}{2}}\in \Psi^\nu_{1,0}(\tn\times \zn).$ However, other examples in the $(\rho,\delta)$-setting appear e.g. if $\delta=0$ with oscillating multipliers 
    \begin{equation}\label{osci}
        P(x,D)=(-\Delta)^{\frac{\nu}{2}}e^{i(-\Delta)^{\frac{1-\rho}{2}}}\in \Psi^\nu_{\rho,0}(\tn\times \zn),
    \end{equation}
     with the oscillating parameter $\rho$ satisfying the inequality $0< \rho\leq 1.$ 
\end{rem}
\begin{rem}
    We also observe that when localising the H\"ormander classes from the Euclidean space  to the torus one can construct the localised classes $\Psi^\nu_{\rho,\delta,loc}(\tn)$ but due to the structure of the asymptotic expansions on these classes, the conditions $0\leq \delta<\rho\leq 1,$ and $\rho\geq 1-\delta$ arise. In particular these two restrictions imply the inequality $\rho>1/2.$ Our  Theorem \ref{main2:2} operates below the range $\rho\leq 1/2,$ where the construction under local coordinate systems is not available. For instance, our approach allow the analysis of the operators in \eqref{osci} when  $\rho\leq 1/2.$
\end{rem}
 This work is organised as follows:
 \begin{itemize}
     \item in Section \ref{Preliminaries} we give a brief review of basic preliminaries on periodic pseudo-differential operators. 
     \item In Section \ref{Sect:3} we develop our analysis for the proof of our main result.  We first obtain energy estimates for  first-order systems, see Theorem \ref{order1}. Then, this analysis is used in the proof  of Theorem \ref{main2:2}.  
 \end{itemize}

\section[Pseudo-differential calculus  on the torus ]{Pseudo-differential operators on the torus}\label{Preliminaries}
In this section we review some basic elements of the pseudo-differential calculus on the torus. For a comprehensive study  on this theory we refer to  \cite{rt:torus,rt:book} and to \cite{rt:torus} for the quantisation on the torus. 
\subsection{Scalar-valued classes on the torus}
The use of  global representations by Fourier series instead of local representations in coordinate charts as it is more customary when working on manifolds has some advantages.  In particular, it will lead us to improve some results regarding the range of the exponent in the fractional Laplacian on the torus. 

For our further analysis let us fix the required notation. Let $\tn=\Rn/\zn$ be the $n$-dimensional torus. As it is well known,    
 the Schwartz space $\ese(\Rn)$ is convenient to define the Euclidean Fourier  transform. In the periodic setting, its counterpart is the corresponding  Schwartz space $\ese(\zn)$ with respect to the dual variable. We introduce it as follows. 
\begin{defn} Let  $\ese(\zn)$ denote the space of {\em rapidly decaying functions} $\varphi:\zn\rightarrow\Co$. That is, $\varphi\in\ese(\zn)$ if for any $M>0$ there exists a constant $C_{\varphi,\, M}$ such that
\[\forall\xi\in \mathbb{Z}^n,\,\,\lvert\varphi(\xi)\rvert\leq C_{\varphi,\, M}\jpxi^{-M}\]
for all $\xi\in\zn$. The topology on $\ese(\zn)$ is given by the seminorms $p_k$, where
$k\in\ene_0$ and  $p_k(\varphi):=\sup\limits_{\xi\in\zn}\jpxi^{k}\lvert\varphi(\xi)\rvert.$
\end{defn}
We can now define the toroidal or periodic Fourier transform. 
\begin{defn} We denote by $\fou_{\tn}$ the {\em toroidal Fourier transform}  
 $\fou_{\tn}:C^{\infty}(\tn)\rightarrow \ese(\zn),\,\, \phi\mapsto \widehat{\phi}\,\, $ by
\[(\fou_{\tn}\varphi)(\xi)=\int\limits_{\tn}e^{-i2\pi x\cdot \xi}\varphi(x)dx,\;\, \xi\in \zn.\]
\end{defn}
\begin{remark}Note that the notion of Fourier transform is related to the spectral decomposition of $L^2(\tn)$ with respect to the collection $\{e^{2\pi ix\cdot k}\}_{k\in\zn},$ which is an orthonormal basis of $L^2(\tn)$. Also,  we  sometimes write $\widehat{\phi}(\xi)$ instead of $(\fou_{\tn}\varphi)(\xi)$. The toroidal Fourier transform is a bijection and its inverse $\fou_{\tn}^{-1}: \ese(\zn)\rightarrow C^{\infty}(\tn)$ is given by 
\[\forall x\in \tn,\,\, \phi(x)=\sum\limits_{\xi\in\zn}e^{i2\pi x\cdot\xi}\widehat{\phi}(\xi).\]
Thus, for every $h\in\ese(\zn)$ we have
\[\forall x\in \tn,\,\,(\fou_{\tn}^{-1}h)(x)=\sum\limits_{\xi\in\zn}e^{i2\pi x\cdot\xi}h(\xi).\] \end{remark}
A fundamental notion for introducing the H\"ormander classes on the torus is the one of difference operators. It will allow us to ``differentiate'' symbols with respect to the discrete variable $\xi\in \mathbb{Z}.$ We record this notion as follows.
\begin{defn} {\bf (Finite differences $\Delta_{\xi}^{\alpha}$).} Let $\sigma:\zn\rightarrow \Co$ and $1\leq i,j\leq n.$ Let $\delta_j\in \ene_0^n$ be defined  by
\begin{equation}
 (\delta_j)_i:=\left\{
\begin{array}{rl}
1,& \mbox{ if } i=j\\
0,& \mbox{ if } i\neq j.
\end{array} \right.
\label{krona}\end{equation}

We define the {\em forward partial difference operator}   $\Delta_{\xi_j}$ by
\[\forall\xi\in \zn,\,\,\Delta_{\xi_j}\sigma(\xi)=\sigma(\xi+\delta_j)-\sigma(\xi),\] 
and for $\alpha\in \ene_0^n$ define
\[\Delta_{\xi}^{\alpha}:=\Delta_{\xi}^{\alpha_1}\cdots\Delta_{\xi}^{\alpha_n}.\]
\end{defn}

\begin{proposition}\label{combal} {\bf (Formulae for  $\Delta_{\xi}^{\alpha}$)} Let $\phi:\zn\rightarrow \Co$. We have 
\beq \forall\xi\in \zn,\,\forall\alpha\in \mathbb{N}_0^n,\,\,  \Delta_{\xi}^{\alpha}\phi(\xi)\,=\, \sum\limits_{\beta\leq\alpha}(-1)^{\lvert\alpha-\beta\rvert}{\alpha\choose\beta} \phi(\xi+\beta).\label{cohr}\eq
\end{proposition}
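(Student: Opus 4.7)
The plan is to rewrite each one-dimensional forward difference as a commuting shift operator minus the identity, and then apply the binomial theorem coordinate by coordinate. Concretely, let $T_j$ denote the lattice shift $T_j\phi(\xi)=\phi(\xi+\delta_j)$; by the very definition of $\Delta_{\xi_j}$ one has $\Delta_{\xi_j}=T_j-I$, and the shifts $T_1,\ldots,T_n$ commute with one another and with the identity. Hence $\Delta_\xi^\alpha$ factorises as a product
\[
\Delta_\xi^\alpha \;=\; \prod_{j=1}^n (T_j-I)^{\alpha_j},
\]
to which the ordinary scalar binomial theorem is applicable in each factor.

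Expanding each one-variable binomial as $(T_j-I)^{\alpha_j}=\sum_{\beta_j=0}^{\alpha_j}(-1)^{\alpha_j-\beta_j}\binom{\alpha_j}{\beta_j}T_j^{\beta_j}$ and multiplying the $n$ expansions together (using commutativity again) produces a sum indexed by $\beta\leq\alpha$ whose general term is $(-1)^{|\alpha-\beta|}\binom{\alpha}{\beta}T^\beta$, where $T^\beta:=T_1^{\beta_1}\cdots T_n^{\beta_n}$ acts as $T^\beta\phi(\xi)=\phi(\xi+\beta)$. Applying this operator identity to $\phi$ gives exactly \eqref{cohr}. There is no genuine obstacle here; the only bookkeeping point to verify is that the multi-index binomial coefficient and the length $|\alpha-\beta|$ factorise across coordinates in precisely the way required, which is immediate from the standard conventions $\binom{\alpha}{\beta}=\prod_j\binom{\alpha_j}{\beta_j}$ and $|\alpha-\beta|=\sum_j(\alpha_j-\beta_j)$. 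As a purely combinatorial alternative, a straightforward induction on $|\alpha|$ using the Pascal-type identity $\binom{\alpha+\delta_j}{\beta}=\binom{\alpha}{\beta}+\binom{\alpha}{\beta-\delta_j}$ works equally well and reduces the multivariate claim to the one-dimensional identity $\Delta^k\phi(\xi)=\sum_{j=0}^k(-1)^{k-j}\binom{k}{j}\phi(\xi+j)$.
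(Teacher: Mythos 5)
Your argument is correct and is precisely the standard proof of this identity (the paper states the proposition without proof, deferring to the toroidal calculus of \cite{rt:torus,rt:book}, where the same operator-binomial argument is used): writing $\Delta_{\xi_j}=T_j-I$ with commuting shifts and expanding $(T_j-I)^{\alpha_j}$ coordinatewise via the binomial theorem yields \eqref{cohr} immediately. Both your main route and the inductive alternative are sound; nothing is missing.
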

We now recall the definition of the  toroidal symbol classes. We  define the discrete Japanese bracket $\jpxi :=(1+\lvert\xi \rvert^2)^\half$ for $\xi\in\zn$, with $\lvert\xi\rvert^2=\xi_1^2+\cdots +\xi_n^2,$ denoting the Euclidean norm.
\begin{defn} Let $m\in\ar$, $0\leq \delta, \rho\leq 1.$ Then the {\em toroidal symbol class} $S_{\rho, \delta}^m(\mathbb{T}^n\times \zet^n)$ consists of those functions $a:=a(x, \xi)$ which are smooth in $x\in\tn,$ for all $\xi\in\zn$, and which satisfy the {\em toroidal symbol inequalities} 
\begin{equation}
    \lvert\Delta_{\xi}^{\alpha}\partial_x^{\beta}a(x,\xi)\rvert\leq C_{\alpha\beta}\jpxi^{m-\rho \lvert \alpha \rvert+\delta \lvert \beta\rvert}
\end{equation}
for every $x\in\tn, \xi\in\zn$ and for all $\alpha, \beta\in \ene_0^n$.
\end{defn} 
\begin{remark} The family of seminorms  \begin{equation}\label{InIC}
      p_{\alpha,\beta,\rho,\delta,m}(a):= \sup_{(x,\xi)\in \tn\times \zn }\jpxi^{\rho\lvert\alpha\rvert-\delta\lvert\beta\rvert-m}\lvert\partial_{x}^{\beta} \Delta_{\xi}^{\alpha}a(x,\xi)\rvert <\infty,
   \end{equation}defines a Fr\'echet structure on every toroidal class $S_{\rho, \delta}^m(\mathbb{T}^n\times \zet^n).$
\end{remark}
\begin{defn} Let $m\in\ar$ and let $0\leq \delta, \rho\leq 1.$
    The toroidal quantisation associated to a symbol $a\in S_{\rho, \delta}^m(\mathbb{T}^n\times \zet^n)$ is the densely defined  operator
    \begin{equation}\label{tro:quan}
      A=a(x,D):=\textnormal{Op}(a):C^\infty(\tn)\rightarrow C^\infty(\tn),  
    \end{equation}
    defined by 
\[ \,\,\,\,  \,\,\,\,\,\,\,\,\,\,\,\,\,\,\,\,Af(x)=\sum\limits_{\xi\in\zn}e^{2\pi ix\cdot \xi}a(x,\xi)\widehat{f}(\xi) ,\,\,\, f\in C^\infty(\tn),\,x\in \tn.\]
\end{defn}
\begin{remark}
    Note that if one has a continuous linear operator $A:C^\infty(\tn)\rightarrow C^\infty(\tn) $, its symbol $a:=a(x,\xi)$ can be recovered from the following formula
\beq a(x,\xi)=e^{-i2\pi x\cdot\xi}Ae_{\xi}(x),\eq
where for every $\xi\in\zn$, $e_{\xi}(x)=e^{i2\pi x\cdot\xi},$  $x\in\tn,$ is the canonical trigonometric polynomial.
\end{remark}
\begin{defn}The family of pseudo-differential operators defined by the toroidal quantisation formula in \eqref{tro:quan} corresponding to the class of symbols $a\in S_{\rho, \delta}^m(\mathbb{T}^n\times \zet^n)$ will be denoted by $\Op S_{\rho,\delta}^{m}(\mathbb{T}^n\times \zet^n)$.     
\end{defn}
The toroidal  quantisation has been extensively analysed in \cite{rt:torus,rt:book} for the general case of $\Tn$ and on compact Lie groups. For the toroidal H\"ormander class of order $m\in \mathbb{R},$ one has  $\Psi^m(\mathbb{T}^n,\textnormal{loc})=\{\sigma(x,D):\sigma\in S^m(\mathbb{T}^n\times \zet^n)\}$ (cf. \cite{rt:torus,rt:book}) for the class of symbols $\Psi^m(\mathbb{T}^n,\textnormal{loc})$ defined by local coordinate systems. 
\begin{remark} The family of seminorms  \begin{equation}\label{InIC}
      p_{\alpha,\beta,\rho,\delta,m}(A):= \sup_{(x,\xi)\in \tn\times \zn }\jpxi^{\rho\lvert\alpha\rvert-\delta\lvert\beta\rvert-m}\lvert\partial_{x}^{\beta} \Delta_{\xi}^{\alpha}a(x,\xi)\rvert <\infty,\,A=\textnormal{Op}(a),
   \end{equation}defines a Fr\'echet structure on every class $\textnormal{Op}(S_{\rho, \delta}^m(\mathbb{T}^n\times \zet^n)).$
\end{remark}
The following proposition gives the equivalence between  Euclidean H\"ormander's symbols and toroidal symbols. It corresponds to Corollary 4.6.13 in \cite{rt:torus,rt:book}.

\begin{proposition}\label{prequiv} For $m\in\ar$, $0\leq \delta\leq 1$ and $0<\rho\leq 1$ we have 
 \[\Psi^{m}_{\rho,\delta}:=\Op S_{\rho,\delta}^{m}(\mathbb{T}^n\times \ar^n)=\Op S_{\rho,\delta}^{m}(\mathbb{T}^n\times \zet^n),\]
 i.e., classes of $1$-periodic pseudo-differential operators  with Euclidean (H\"ormander's) symbols in $\Op S_{\rho,\delta}^{m}(\mathbb{T}^n\times \ar^n)$ and toroidal symbols in $\Op S_{\rho,\delta}^{m}(\mathbb{T}^n\times \zet^n)$ coincide.
\end{proposition}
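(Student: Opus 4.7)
The equivalence is really two statements: (i) every Euclidean symbol in $S^m_{\rho,\delta}(\tn\times\ar^n)$, when restricted to $\tn\times\zn$, gives a toroidal symbol in $S^m_{\rho,\delta}(\tn\times\zn)$, and (ii) every toroidal symbol admits a smooth extension to $\tn\times\ar^n$ inside the same Euclidean class, and both quantisations agree on $C^\infty(\tn)$. My plan is to prove these two inclusions separately, then reconcile the two notions of quantisation via Poisson summation.

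For direction (i), the main point is to compare $\Delta_\xi^\alpha$ on $\zn$ with $\partial_\xi^\alpha$ on $\ar^n$. The Taylor-with-integral-remainder identity
\begin{equation*}
\Delta_{\xi_j} a(x,\xi) = a(x,\xi+\delta_j)-a(x,\xi) = \int_0^1 (\partial_{\xi_j}a)(x,\xi+t\delta_j)\,dt,
\end{equation*}
iterated, expresses $\Delta_\xi^\alpha a(x,\xi)$ as an integral average of $\partial_\xi^\alpha a(x,\xi+\theta)$ over a bounded parameter set; since $\jpxi\sim\langle\xi+\theta\rangle$ uniformly in $\theta$ in a bounded set, the Euclidean estimate $|\partial_x^\beta\partial_\xi^\alpha a|\lesssim \jpxi^{m-\rho|\alpha|+\delta|\beta|}$ transfers verbatim to the toroidal estimate for $\partial_x^\beta\Delta_\xi^\alpha a$ on $\tn\times\zn$.

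For direction (ii), given a toroidal symbol $a$, I would construct its extension by a smooth interpolation. Fix a Schwartz (or smooth compactly supported) function $\chi$ on $\ar^n$ such that $\{\chi(\cdot-k)\}_{k\in\zn}$ is a partition of unity, and define
\begin{equation*}
\tilde a(x,\xi) = \sum_{k\in\zn} a(x,k)\,\chi(\xi-k).
\end{equation*}
Smoothness in $x$ and rapid convergence are inherited from $\chi$. The crucial estimate is $|\partial_\xi^\alpha\tilde a(x,\xi)|\lesssim \jpxi^{m-\rho|\alpha|}$. To obtain this, I would rewrite the sum, for $\xi$ in a unit cube centred at $k_0$, as a finite-difference expansion: using the binomial formula from Proposition \ref{combal} one can replace $a(x,k)=a(x,k_0+(k-k_0))$ by its $\Delta_\xi$-Taylor polynomial at $k_0$ plus a remainder, and express $\partial_\xi^\alpha \tilde a$ through $\Delta_\xi^\beta a(x,k_0)$ with $|\beta|\geq|\alpha|$, controlled by $\jpxi^{m-\rho|\alpha|}$. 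The $x$-derivatives cost only a factor $\jpxi^{\delta|\beta|}$ because the expansion is linear in $a$. This is exactly the Ruzhansky--Turunen interpolation argument.

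Finally I would identify the two quantisations. For $f\in C^\infty(\tn)$, its Euclidean Fourier transform is the measure $\sum_{\xi\in\zn}\widehat{f}(\xi)\delta_\xi$, so for $\tilde a\in S^m_{\rho,\delta}(\tn\times\ar^n)$,
\begin{equation*}
\tilde a(x,D)f(x) = \sum_{\xi\in\zn} e^{2\pi i x\cdot\xi}\tilde a(x,\xi)\widehat{f}(\xi),
\end{equation*}
which coincides with $\Op(a)f$ when $\tilde a|_{\tn\times\zn}=a$. This closes the loop. The main obstacle is step (ii): producing an extension that keeps the sharp exponent $\rho$ (no derivative loss) while only knowing finite differences on $\zn$. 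The subtlety is that a naive convolution extension costs factors of $\jpxi^{-|\alpha|}$ for all $\alpha$ rather than $\jpxi^{-\rho|\alpha|}$; the interpolation with $\chi$ and the combinatorial identity of Proposition \ref{combal} are precisely what is needed to match the Euclidean estimate exactly, and this combinatorial bookkeeping is the technical heart of the proof.
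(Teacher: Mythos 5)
The paper does not actually prove this proposition: it is quoted from Corollary 4.6.13 of Ruzhansky--Turunen \cite{rt:torus,rt:book}, so there is no in-paper argument to compare against. Your outline is the standard proof from that reference: restriction via the Newton--Leibniz identity for direction (i), extension by interpolation against integer translates of a fixed kernel for direction (ii), and identification of the two quantisations through the fact that $\widehat f$ is a sum of point masses on $\zn$. Direction (i) as you wrote it is complete and correct, and the final identification of quantisations is correct \emph{provided} the extension genuinely interpolates.

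The gap is in your choice of $\chi$ for direction (ii). The partition-of-unity condition $\sum_k\chi(\xi-k)=1$ alone gives neither of the two properties you actually use. First, interpolation: your last step needs $\tilde a|_{\tn\times\zn}=a$, i.e.\ $\chi(j)=\delta_{j0}$ for $j\in\zn$, which by Poisson summation is the condition $\sum_m\widehat\chi(\cdot+m)\equiv1$ on the frequency side, not $\sum_k\chi(\cdot-k)\equiv1$ on the space side. Second, the derivative estimates: to kill the discrete Taylor polynomial of degree $<\lvert\alpha\rvert$ under $\partial_\xi^\alpha$ you need $\sum_k P(k)\,\partial^\alpha\chi(\xi-k)=0$ for every polynomial $P$ of degree $<\lvert\alpha\rvert$ and every $\alpha$, i.e.\ $\widehat\chi$ must vanish to infinite order at every point of $\zn\setminus\{0\}$. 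This rules out your parenthetical ``smooth compactly supported'' option: for such $\chi$ the transform $\widehat\chi$ is entire and would have to vanish identically. The correct choice (the one in Ruzhansky--Turunen) is $\chi=\mathcal{F}^{-1}\theta$ with $\theta\in C_c^\infty((-1,1)^n)$ and $\sum_m\theta(\cdot+m)\equiv1$; this simultaneously gives $\chi|_{\zn}=\delta_{0}$, $\sum_k\chi(\cdot-k)=1$, and, because $(2\pi i\eta_j)/(e^{2\pi i\eta_j}-1)$ is smooth on $\operatorname{supp}\theta$, an exact summation-by-parts identity $\partial_\xi^\alpha\tilde a(x,\xi)=\sum_k\chi_\alpha(\xi-k)\,\Delta_\xi^\alpha a(x,k)$ with $\chi_\alpha\in\mathcal{S}(\ar^n)$. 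That identity yields the sharp exponent $\rho\lvert\alpha\rvert$ directly, with no Taylor-remainder bookkeeping; with this substitution your argument closes.
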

In the next theorem we describe some fundamental properties of the  calculus of pseudo-differential operators on the torus.
\begin{theorem}\label{calculus} Let $0\leqslant \delta<\rho\leqslant 1,$ and let  $m\in \mathbb{R}.$ Then:
\begin{itemize}
    \item [-] the mapping $A\mapsto A^{*}:\Psi^{m }_{\rho,\delta}\rightarrow \Psi^{m }_{\rho,\delta},$ that assigns to each operator $A$ its formal adjoint $A^*,$ is a continuous linear mapping between Fr\'echet spaces and  the  symbol $\sigma_{A^*}(x,\xi)$ of $A^*,$  satisfies the asymptotic expansion,
 \begin{equation*}
    \sigma_{A^{*}}(x,\xi)\sim \sum_{\alpha}\Delta_{\xi}^\alpha\partial_{x}^{\alpha} (\overline{\sigma_{A}(x,\xi)}),\,(x,\xi)\in \tn\times \zn.
 \end{equation*} This means that, for every $N\in \mathbb{N},$ and for all $\ell\in \mathbb{N},$
\begin{align*}
    \Delta_{\xi}^{\alpha_\ell}\partial_{x}^{\beta}\left(\sigma_{A^{*}}(x,\xi)-\sum_{\lvert\alpha\rvert\leqslant N}\Delta_{\xi}^\alpha\partial_{x}^{\alpha} (\overline{\sigma_{A}(x,\xi)}) \right)\\
    \hspace{2cm}\in {S}^{m-(\rho-\delta)(N+1)-\rho\ell+\delta\lvert\beta\rvert}_{\rho,\delta}(\tn\times \zn),
\end{align*} where $\alpha_\ell\in \mathbb{N}_0^n$ is such that $\lvert\alpha_\ell\rvert=\ell.$
\item [-] The mapping $(A_1,A_2)\mapsto A_1\circ A_2: \Psi^{m_1}_{\rho,\delta}\times \Psi^{m_2}_{\rho,\delta}\rightarrow \Psi^{m_1+m_2}_{\rho,\delta}$ is a continuous bilinear mapping between Fr\'echet spaces, and the symbol of $A=A_{1}\circ A_2$ satisfies the asymptotic formula
\begin{equation*}
    \sigma_A(x,\xi)\sim \sum_{\alpha}(\Delta_{\xi}^\alpha\sigma_{A_{1}}(x,\xi))(\partial_{x}^{\alpha} \sigma_{A_2}(x,\xi)),
\end{equation*}which, in particular, means that, for every $N\in \mathbb{N},$ and for all $\ell \in\mathbb{N},$
\begin{align*}
    &\Delta_{\xi}^{\alpha_\ell}\partial_{x}^{\beta}\left(\sigma_A(x,\xi)-\sum_{\lvert\alpha\rvert\leqslant N}  (\Delta_{\xi}^\alpha\sigma_{{A_1}}(x,\xi))(\partial_{x}^{\alpha} \sigma{A_2}(x,\xi))  \right)\\
    &\hspace{2cm}\in {S}^{m_1+m_2-(\rho-\delta)(N+1)-\rho\ell+\delta\lvert\beta\rvert}_{\rho,\delta}(\tn\times \zn),
\end{align*}for all  $\alpha_\ell \in \mathbb{N}_0^n$ with $\lvert\alpha_\ell\rvert=\ell.$
\item [-] For  $0\leqslant \delta\leqslant \rho\leqslant    1,$    $\delta\neq 1,$ let us consider a continuous linear operator $A:C^\infty(\tn)\rightarrow\mathscr{D}'(\tn)$ with symbol  $\sigma\in {S}^{0}_{\rho,\delta}(\tn\times \zn)$. Then $A$ extends to a bounded operator from $L^2(\tn)$ to  $L^2(\tn).$ 
\end{itemize}
\end{theorem}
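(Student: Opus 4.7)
My plan is to treat the three items of the theorem separately, using the toroidal quantisation as the main technical vehicle. The adjoint and composition asymptotic formulas will both come from writing out the relevant symbol via the identity $\sigma_A(x,\xi)=e^{-2\pi i x\cdot\xi}(Ae_\xi)(x)$ and then performing a discrete Taylor expansion in the dual variable $\xi\in\zn$, dual to summation by parts in the auxiliary summation variable. The $L^2$-boundedness will be obtained by transferring the classical Calder\'on--Vaillancourt theorem from $\ar^n$ to $\tn$ using the symbol-class equivalence of Proposition \ref{prequiv}.

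\textbf{Adjoint.} For $A=\Op(a)$ with $a\in S^m_{\rho,\delta}(\tn\times\zn)$, I would start from $\sigma_{A^*}(x,\xi)=e^{-2\pi i x\cdot\xi}(A^*e_\xi)(x)$. Using $(A^*e_\xi,e_\eta)_{L^2(\tn)}=\overline{(e_\xi,Ae_\eta)_{L^2(\tn)}}$ together with Fourier inversion leads to the representation
\[
\sigma_{A^*}(x,\xi)=\sum_{\eta\in\zn}\int_{\tn}e^{2\pi i(x-y)\cdot\eta}\,\overline{a(y,\xi+\eta)}\,dy.
\]
The heart of the argument is the discrete Taylor formula, which expresses $\overline{a(y,\xi+\eta)}$ as $\sum_{|\alpha|\leqslant N}\frac{\eta^{(\alpha)}}{\alpha!}\Delta_\xi^\alpha\overline{a(y,\xi)}$ plus an iterated finite-difference remainder, where $\eta^{(\alpha)}$ is the falling factorial monomial adapted to $\Delta_\xi^\alpha$. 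Summation by parts in $\eta$ converts multiplication by $\eta^{(\alpha)}$ into $\partial_y^\alpha$ acting on $\overline{a(y,\xi)}$, and Fourier inversion in $y$ collapses the leading terms into $\sum_\alpha \Delta_\xi^\alpha\partial_x^\alpha\overline{a(x,\xi)}$. The remainder is estimated directly from the toroidal symbol inequalities, yielding the stated order $m-(\rho-\delta)(N+1)$ and the additional gains $\rho\ell$ and losses $\delta|\beta|$ after further $\Delta_\xi^{\alpha_\ell}$ and $\partial_x^\beta$. Tracking the seminorm bounds at each step gives continuity between Fr\'echet spaces.

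\textbf{Composition.} For $A_j=\Op(a_j)$, $j=1,2$, I would evaluate $(A_1A_2)e_\xi(x)=A_1\bigl(a_2(\cdot,\xi)e_\xi\bigr)(x)$ by expanding $a_2(\cdot,\xi)$ in a Fourier series in its first variable and re-indexing so that $\xi$ is the base point. This yields the amplitude representation
\[
\sigma_{A_1A_2}(x,\xi)=\sum_{\eta\in\zn}\int_{\tn}e^{2\pi i(x-y)\cdot\eta}\,a_1(x,\xi+\eta)\,a_2(y,\xi)\,dy.
\]
Expanding $a_1(x,\xi+\eta)$ in $\eta$ by discrete Taylor and performing summation by parts in $\eta$ (which moves $\partial_y^\alpha$ onto $a_2$) produces $\sum_\alpha(\Delta_\xi^\alpha a_1)(\partial_x^\alpha a_2)$ modulo a remainder in $S^{m_1+m_2-(\rho-\delta)(N+1)}_{\rho,\delta}$, exactly as stated. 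Bilinear Fr\'echet continuity follows by the same seminorm tracking.

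\textbf{$L^2$-boundedness and main obstacle.} For the third item, Proposition \ref{prequiv} identifies a toroidal symbol $\sigma\in S^0_{\rho,\delta}(\tn\times\zn)$ with a $1$-periodic Euclidean symbol in $S^0_{\rho,\delta}(\ar^n\times\ar^n)$. The classical Calder\'on--Vaillancourt theorem in the range $0\leqslant\delta\leqslant\rho\leqslant 1$, $\delta\neq 1$, gives an $L^2(\ar^n)$-bounded Euclidean operator whose action on $1$-periodic distributions coincides with $A=\Op(\sigma)$, and a standard Fourier-series/periodisation argument transfers the $L^2(\ar^n)$ estimate to $L^2(\tn)$ with quantitative dependence on finitely many symbol seminorms. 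The technical heart of the first two items is the discrete Taylor expansion with a \emph{sharp} remainder in the toroidal classes: because $\Delta_\xi$ shifts the argument by integer steps, there is no integral mean-value remainder, and obtaining the full gain $(\rho-\delta)(N+1)$ requires carefully combining the finite-difference remainder formula with summation by parts in $\eta$ and with estimates relating $\langle\xi+\eta\rangle$ to $\langle\xi\rangle$, whose delicate regime is $|\eta|\sim|\xi|$. Once this bookkeeping is in place, the remaining steps parallel the classical Kohn--Nirenberg calculus.
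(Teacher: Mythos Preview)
The paper does not supply its own proof of this theorem: it is recorded in the preliminaries section as a known result from the toroidal calculus of \cite{rt:torus,rt:book}, and no argument is given in the paper itself. Your sketch is essentially the route taken in those references---the discrete Taylor expansion in the frequency variable combined with summation by parts is precisely how the adjoint and composition asymptotics are derived there, and the quantitative tracking of seminorms you describe is what yields the Fr\'echet continuity.

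One small caveat on the third item: your transfer argument invokes Proposition~\ref{prequiv}, but in the paper that proposition is stated for $0<\rho\leq 1$, whereas the $L^2$-boundedness clause allows $0\leq\delta\leq\rho\leq 1$ with $\delta\neq 1$, so in particular the case $\rho=\delta=0$ is not covered by the transfer. That endpoint would require a direct argument on $\tn$ (e.g.\ a Cotlar--Stein decomposition or the intrinsic toroidal Calder\'on--Vaillancourt proof in \cite{rt:book}) rather than periodisation from $\arn$. Apart from this, your plan is sound and matches the standard derivation.
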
 
\begin{remark}\label{Remark:Sob}
    We now recall the definition of the  Sobolev space of order $s\in\er$ on the torus.
 For $u\in\mathcal{D}'(\To^n)$ we define the norm $\|\cdot\|_{H^s(\To^n)}$ by
\[\|u\|_{H^s(\To^n)}:=\left(\sum\limits_{\xi\in\zet^n}(1+\lvert\xi\rvert^2)^s\lvert\hat{u}(\xi)\rvert^2\right)^{1/2}.\] 
 The Sobolev space $H^s(\To^n)$, is the space of the $1$-periodic distributions $u$ such that  $\|u\|_{H^s(\To^n)}<\infty.$ 
\end{remark}

\subsection{Matrix-valued classes on the torus}\label{matrix:values:classes}

In order to study  our problem \eqref{probei2gax}, we will reduce it to a first-order system with respect to the time variable. For this reduced system, we will require the notion  of a matrix-valued symbolic calculus on the torus which we introduce it as follows:
\begin{defn} Let $\ell\in \mathbb{N}.$ An $\ell\times \ell$-matrix symbol
\begin{equation}
    \mathfrak{a}:=[a_{ij}]_{1\leq i,j\leq \ell },\,\,a_{ij}:\tn\times \zn\rightarrow \mathbb{C},
\end{equation}
belongs to the matrix-valued class $S^m_{\ell\times\ell;\rho,\delta}(\tn\times\zn)$ if each one of its entries $a_{ij}$ is a symbol in $S^m_{\rho,\delta}(\tn\times\zn)$. The $\ell\times \ell$-matrix-valued pseudo-differential operator $\mathfrak{A}:=\textnormal{Op}(\mathfrak{a})$ associated to the symbol $\mathfrak{a},$ is defined via
\begin{equation}
   \textnormal{Op}( \mathfrak{a}):=[\textnormal{Op}(a_{ij})]_{1\leq i,j\leq \ell },\,\,\textnormal{Op}(a_{ij})\in \Psi^{m}_{\rho,\delta}(\tn\times \zn).
\end{equation}
\begin{remark}We will use the notation
\[\Psi^m_{\ell\times \ell;\rho,\delta}(\tn\times \zn):=\textnormal{Op}(S^m_{\ell\times\ell;\rho,\delta}(\tn\times\zn)),\] for the class of operators $\textnormal{Op}( \mathfrak{a})$ with a symbol $\mathfrak{a}$ belonging to the matrix-valued class $S^m_{\ell\times\ell;\rho,\delta}(\tn\times\zn).$    
\end{remark}
\end{defn}
\begin{remark} The family of seminorms  \begin{equation}\label{InIC:2}
      p_{\alpha,\beta,\rho,\delta,m}(\mathfrak{a}):= \sup_{(x,\xi)\in \tn\times \zn }\sup_{1\leq i,j\leq \ell}\jpxi^{\rho\lvert\alpha\rvert-\delta\lvert\beta\rvert-m}\lvert\partial_{x}^{\beta} \Delta_{\xi}^{\alpha}a_{ij}(x,\xi)\rvert <\infty,
   \end{equation}defines a Fr\'echet structure on every class $S_{\ell\times \ell;\rho, \delta}^m(\mathbb{T}^n\times \zet^n).$
Moreover,  the family of seminorms  \begin{equation}\label{InIC:3}      p_{\alpha,\beta,\rho,\delta,m}(\textnormal{Op}(\mathfrak{a})):= p_{\alpha,\beta,\rho,\delta,m}(\mathfrak{a}),\,A=\textnormal{Op}(a),
   \end{equation}defines a Fr\'echet structure on every class $\textnormal{Op}(S_{\ell\times \ell;\rho, \delta}^m(\mathbb{T}^n\times \zet^n)).$
\end{remark}
\begin{remark} The basic properties of the calculus for $S^m_{\rho,\delta}(\tn\times\zn)$ also hold for the matrix-valued classes $S^m_{\ell\times\ell;\rho,\delta}(\tn\times\zn)$. Indeed, the classes of symbols $S^m_{\rho,\delta}(\tn\times\zn),$ $m\in \mathbb{R},$ are closed under the addition
\[\mathfrak{a}_1+\mathfrak{a}_2=[a_{1,ij}+a_{2,ij}]_{i,j=1}^\ell,\,\mathfrak{a}_k=[a_{k,ij}]_{i,j=1}^\ell,\, k=1,2,\]
of matrix-valued symbols. Indeed, note that
\begin{equation}
S^{m_1}_{\ell\times\ell;\rho,\delta}(\tn\times\zn)\times S^{m_2}_{\ell\times\ell;\rho,\delta}(\tn\times\zn) \ni (\mathfrak{a}_1,\mathfrak{a}_2)\mapsto   \mathfrak{a}_1+\mathfrak{a}_2\in S^{m_1+m_2}_{\ell\times\ell;\rho,\delta}(\tn\times\zn).\,
\end{equation}Is clear that, in this case, 
\[\textnormal{Op}(\mathfrak{a}_1+\mathfrak{a}_2)=\textnormal{Op}(\mathfrak{a}_1)+\textnormal{Op}(\mathfrak{a}_2)\in \Psi^{m_1+m_2}_{\ell\times\ell;\rho,\delta}(\tn\times\zn). \] Also, observe that if we define on $L^{2}(\tn;\mathbb{C}^{\ell})$ the inner product
\[ (u,v)_{L^2(\tn;\mathbb{C}^{\ell})}=\sum_{i=1}^{\ell}(u_{i},v_{i})_{L^2(\tn)},\,u:=(u_1,\cdots,u_{\ell}),\,v:=(v_1,\cdots,v_{\ell})\in L^{2}(\tn;\mathbb{C}^{\ell}) , \]
  we have that  
\begin{align*}
    (\textnormal{Op}(\mathfrak{a})u,v)_{L^2(\tn;\mathbb{C}^{\ell})}&=\sum_{i=1}^{\ell}((\textnormal{Op}(\mathfrak{a})u)_{i},v_{i})_{L^2(\tn)}=\sum_{i=1}^\ell\sum_{k=1}^\ell (\textnormal{Op}(\mathfrak{a})_{ik}u_k,v_{i})_{L^2(\tn)}\\
    &=\sum_{k=1}^\ell \sum_{i=1}^\ell(u_k,\textnormal{Op}(\mathfrak{a})_{ik}^{*}v_{i})_{L^2(\tn)}.
\end{align*}So, defining
\begin{equation}
 \textnormal{Op}(\mathfrak{a}^{*})_{ki}:=   \textnormal{Op}(\mathfrak{a})_{ik}^{*}
\end{equation} the formal adjoint $\textnormal{Op}(\mathfrak{a})^*$ of $\textnormal{Op}(\mathfrak{a})$ is determined by the entries
\begin{equation}
    \textnormal{Op}(\mathfrak{a})^{*}=[\textnormal{Op}(\mathfrak{a}^{*})_{ki}]_{k,i=1}^{\ell}.
\end{equation}Indeed, observe that
\begin{align*}
     (\textnormal{Op}(\mathfrak{a})u,v)_{L^2(\tn;\mathbb{C}^{\ell})}&=\sum_{k=1}^\ell \sum_{i=1}^\ell(u_k,\textnormal{Op}(\mathfrak{a})_{ik}^{*}v_{i})_{L^2(\tn)}=\sum_{k=1}^\ell \sum_{i=1}^\ell(u_k,\textnormal{Op}(\mathfrak{a}^{*})_{ki}v_{i})_{L^2(\tn)}\\
     &=\sum_{k=1}^\ell(u_k,(\textnormal{Op}(\mathfrak{a}^{*})v)_k)_{L^2(\tn;\mathbb{C}^{\ell})}\\
     &=(u,\textnormal{Op}(\mathfrak{a}^*)v)_{L^2(\tn;\mathbb{C}^{\ell})}.
\end{align*}The previous analysis also proves that if $\textnormal{Op}(\mathfrak{a})\in \Psi^{m}_{\ell\times \ell;\rho,\delta}(\tn\times \zn)$ then $\textnormal{Op}(\mathfrak{a})^*\in \Psi^{m}_{\ell\times \ell;\rho,\delta}(\tn\times \zn).$  
\end{remark}
In view of the previous remark and as a direct consequence of Theorem \ref{calculus} the calculus for matrix-valued pseudo-differential operators can be summarised in the following result.
\begin{theorem}\label{calculus:II} Let $0\leqslant \delta<\rho\leqslant 1,$ and for every $m\in \mathbb{R}.$ Then:
\begin{itemize}
    \item [-] the mapping $A\mapsto A^{*}:\Psi^{m }_{\ell\times \ell;\rho,\delta}(\tn\times \zn)\rightarrow \Psi^{m }_{\ell\times \ell;\rho,\delta}(\tn\times \zn),$ that assigns to each operator $A$ its formal adjoint $A^*,$ is a continuous linear mapping between Fr\'echet spaces.
\item [-] The mapping $(A_1,A_2)\mapsto A_1\circ A_2: \Psi^{m_1}_{\ell\times \ell;\rho,\delta}(\tn\times \zn)\times \Psi^{m_2}_{\ell\times \ell;\rho,\delta}(\tn\times \zn)\rightarrow \Psi^{m_1+m_2}_{\ell\times \ell;\rho,\delta}(\tn\times \zn)$ is a continuous bilinear mapping between Fr\'echet spaces.
\end{itemize}
\end{theorem}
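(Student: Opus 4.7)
The plan is to reduce both statements entry-by-entry to their scalar counterparts in Theorem \ref{calculus}, exploiting the fact that the matrix-valued classes $S^m_{\ell\times\ell;\rho,\delta}(\tn\times\zn)$ and $\Psi^m_{\ell\times\ell;\rho,\delta}(\tn\times\zn)$ are defined entry-wise, and that the Fr\'echet seminorms in \eqref{InIC:2}--\eqref{InIC:3} are precisely the supremum over $1\leq i,j\leq \ell$ of the scalar seminorms in \eqref{InIC}. Under this reduction, continuity will follow because a supremum of finitely many continuous maps between Fr\'echet spaces is continuous, and a finite sum of continuous maps is continuous.

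For the adjoint part, I would start from the identity
$$\textnormal{Op}(\mathfrak{a})^{*}=[\textnormal{Op}(a_{ik})^{*}]_{k,i=1}^{\ell},$$
which was already verified in the preceding remark via the $L^{2}(\tn;\mathbb{C}^{\ell})$-inner product. Applying the scalar adjoint statement of Theorem \ref{calculus} to each entry $\textnormal{Op}(a_{ik})\in \Psi^{m}_{\rho,\delta}$ gives $\textnormal{Op}(a_{ik})^{*}\in \Psi^{m}_{\rho,\delta}$, so by definition $\textnormal{Op}(\mathfrak{a})^{*}\in \Psi^{m}_{\ell\times\ell;\rho,\delta}(\tn\times\zn)$. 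For continuity, each seminorm $p_{\alpha,\beta,\rho,\delta,m}(\textnormal{Op}(\mathfrak{a})^{*})$ is the supremum over $k,i$ of the scalar seminorms of $\textnormal{Op}(a_{ik})^{*}$; by continuity of the scalar adjoint map, each such scalar seminorm is controlled by a finite sum of scalar seminorms of $\textnormal{Op}(a_{ik})$, and taking the supremum over entries yields a bound in terms of a finite sum of matrix-valued seminorms of $\textnormal{Op}(\mathfrak{a})$. For the composition, matrix multiplication gives
$$(\textnormal{Op}(\mathfrak{a}_1)\circ \textnormal{Op}(\mathfrak{a}_2))_{ij}=\sum_{k=1}^{\ell}\textnormal{Op}(a_{1,ik})\circ \textnormal{Op}(a_{2,kj}),$$
and each summand lies in $\Psi^{m_1+m_2}_{\rho,\delta}$ by Theorem \ref{calculus}, hence so does the finite sum. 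Consequently $\textnormal{Op}(\mathfrak{a}_1)\circ \textnormal{Op}(\mathfrak{a}_2)\in \Psi^{m_1+m_2}_{\ell\times\ell;\rho,\delta}(\tn\times\zn)$. Bilinear continuity follows by bounding each entry seminorm using the scalar bilinear continuity of composition, which controls the $(i,j)$-entry by a finite sum over $k$ of products of scalar seminorms of entries of $\mathfrak{a}_1$ and $\mathfrak{a}_2$; passing to the supremum over $i,j$ produces the required estimate in terms of products of matrix-valued seminorms $p_{\alpha,\beta,\rho,\delta,m_1}(\mathfrak{a}_1)\cdot p_{\alpha',\beta',\rho,\delta,m_2}(\mathfrak{a}_2)$.

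There is no genuine obstacle in this argument: the theorem is essentially a packaging statement, and the substantive analysis (the asymptotic expansions and their remainder estimates) was already carried out in the scalar case. The only mild care needed is bookkeeping to see that taking finite sums over $k$ and the supremum over $i,j$ preserves continuity in the Fr\'echet topology, which is routine. If one wanted to go beyond the minimal statement, one could record entry-wise asymptotic expansions for $\sigma_{\textnormal{Op}(\mathfrak{a})^{*}}$ and $\sigma_{\textnormal{Op}(\mathfrak{a}_1)\circ\textnormal{Op}(\mathfrak{a}_2)}$ directly paralleling those of Theorem \ref{calculus}, but the theorem as stated requires only class membership and continuity, so the reduction above suffices.
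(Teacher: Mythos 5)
Your proposal is correct and follows essentially the same route as the paper: the paper derives Theorem \ref{calculus:II} as a direct consequence of the scalar Theorem \ref{calculus} together with the preceding remark, which establishes precisely the entry-wise adjoint identity $\textnormal{Op}(\mathfrak{a})^{*}=[\textnormal{Op}(\mathfrak{a})_{ik}^{*}]_{k,i}$ and the closure of the matrix classes under the relevant operations. Your additional bookkeeping on the seminorms (finite sums over $k$ and suprema over $i,j$ preserving continuity) just makes explicit what the paper leaves implicit.
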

\begin{remark}\label{rem:lambda} Consider the positive Laplacian $\mathcal{L}_{\tn}=-\sum_{i=1}^n\partial_{x_i}^2$ on the torus $\tn.$ In view of the functional calculus for elliptic operators one has that
\begin{equation}
    \Lambda^s:=(1+\mathcal{L}_{\tn})^{\frac{s}{2}}\in \Psi^{s}_{1,0}(\tn\times \zn),\,s\in \mathbb{R}.
\end{equation}Note that on $C^{\infty}(\tn, \mathbb{C}^\ell)$ the operator
\begin{equation}
    \Lambda^s_{\ell\times \ell}:=[\Lambda^s\delta_{ij}]_{i,j=1}^\ell,
\end{equation} where $\delta_{ij}$ is the Delta-Kronecker, defines an operator in the class $\Psi^{s}_{\ell\times\ell;1,0}(\tn\times \zn).$ Since, for any $u=(u_1,\cdots,u_{\ell})\in C^{\infty}(\tn, \mathbb{C}^\ell) $ one has that
\begin{align*}
    \Lambda^s_{\ell\times \ell}u=(\Lambda^s u_1,\cdots,\Lambda^s u_{\ell}),
\end{align*}in order to simplify the notation we will always write $\Lambda^s:=\Lambda^s_{\ell\times \ell}$ omitting the sub-index $\ell\times \ell.$     
\end{remark} Now, the simplified notation above allows us to introduce   the vector-valued Sobolev space $H^s(\To^n,\mathbb{C}^{\ell} )$, defined by the $1$-periodic distributions $u$  such that  
\[\|u\|_{H^s(\To^n,\mathbb{C}^{\ell})}=\Vert \Lambda^s u \Vert_{L^2(\tn,\mathbb{C}^{\ell})}=\left(\sum_{i=1}^{\ell}\Vert \Lambda^s u_i \Vert_{L^2(\tn)}^{2}\right)^{\frac{1}{2}}<\infty.\] 

\section[Pseudo-differential hyperbolic equations on the torus]{Pseudo-differential hyperbolic equations on the torus}\label{Sect:3} 
In this section we establish the well-posedness for the problem \eqref{probei2gax}.
 We first recall the notions of ellipticity and strong ellipticity in the setting of the 
 $S_{\rho, \delta}^{m}(\mathbb{T}^n\times \zet^n)$-classes. We then  obtain energy estimates and the corresponding consequences on the well-posedness for first order systems and its consequence on Fractional hyperbolic equations. Some aspects on the the fractional Laplacian are briefly reviewed.  For a comprehensive study  on this theory we refer to  \cite{rt:torus,rt:book} and for a more detailed discussion  on the fractional Laplacian on the torus we refer to \cite{ron:t}.\\

The classical notion of ellipticity for pseudo-differential operators on $\arn$ extends into the toroidal setting in an analogous way. We point out that some further restrictions on $\rho, \delta$ have to be imposed in order to obtain an useful definition. 

\begin{defn} Let $m\in\er$,  $0\leq \delta< \rho\leq 1$ and  $\sigma$ be a symbol in $\sigma\in S_{\rho, \delta}^m(\To^n\times\zet^n)$. 
\begin{itemize}
    \item We say that the corresponding  pseudo-differential operator $\sigma(x,D)$ is {\em elliptic of order }$m$, if $\sigma$ satisfies
\beq \exists n_0\in \mathbb{N},\,\forall (x,\xi)\in \To^n\times\zet^n:\,\, \lvert\xi\rvert\geq n_0\implies \lvert\sigma(x,\xi)\rvert\geq C_0\langle \xi\rangle^m ,\eq for some constants $n_0, C_0>0$.
\item We will also say that $\sigma(x,D)$ is a {\em strongly elliptic operator} 
if $\sigma$ satisfies:
\beq \exists n_0\in \mathbb{N},\, \forall (x,\xi)\in \To^n\times\zet^n:\,\, \lvert\xi\rvert\geq n_0\implies {\mbox Re }\, \sigma(x,\xi)\geq C_0\langle \xi\rangle^m ,\eq  
for some constants $n_0, C_0>0$.
\end{itemize}
\end{defn}

\begin{remark}
We now specialize into the case of the fractional Laplacian.  We first state a basic property of this interesting operator on the torus  that distinguish it from the case of $\arn$. As a consequence of this fact and the above theorem we obtain some corollaries for the  fractional diffusion.

Let  $\nu$ be a strictly positive real number, the fractional Laplacian $(-\Delta)^{\frac{\nu}{2}}$ on the torus $\tn$ is defined as the  Fourier multiplier corresponding to  $\lvert\xi\vert^{\nu}$, that is 
\beq \widehat{(-\Delta)^{\frac{\nu}{2}}u}(\xi)=(2\pi)^{\nu}\lvert\xi\vert^{\nu}\widehat{u}(\xi),\eq 
 for every $\xi\in\zn$ and $u\in C^{\infty}(\tn)$. The Fourier inversion formula on the torus also allows the integral representation in \eqref{toroidal:fraclap} for  $(-\Delta)^{\frac{\nu}{2}}$. Other alternative definitions are possible as in the case of $\Rn$. We refer the reader to \cite{mk:aw} for a recent review on the most common ones and the equivalence between them on $\Rn$. We point out that on $\Rn$, the fractional Laplacian $(-\Delta)^{\frac{\nu}{2}}$ has not a symbol in a class of pseudo-differential operators unless $\frac{\nu}{2}$ be an integer, however in our toroidal setting  the fractional Laplacian will be a pseudo-differential operator for every $\nu>0.$  We first state a mild lemma on the torus  clarifying such property.
 \end{remark}

\begin{lemma}\label{flle} Let $\nu>0.$ Then $(-\Delta)^{\frac{\nu}{2}}\in \Op S_{1,0}^{\nu}(\mathbb{T}^n\times \zet^n)$. 
\end{lemma}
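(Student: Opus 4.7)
\medskip

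\noindent\textbf{Proof proposal for Lemma \ref{flle}.} The plan is to verify the toroidal symbol inequalities for the symbol
\[
p(\xi) := (2\pi)^\nu |\xi|^\nu,\qquad \xi\in\zn,
\]
directly from the definition of $S_{1,0}^\nu(\tn\times\zn)$. Since $p$ is independent of $x$, all $x$-derivatives vanish identically and we reduce immediately to verifying, for every multi-index $\alpha\in\ene_0^n$, the estimate
\begin{equation}\label{goal:frac}
    |\Delta_\xi^\alpha p(\xi)|\leq C_\alpha\langle\xi\rangle^{\nu-|\alpha|},\qquad \xi\in\zn.
\end{equation}

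The strategy is to compare the discrete finite differences with classical derivatives of the smooth function $f(\eta):=(2\pi)^\nu|\eta|^\nu$, $\eta\in\arn\setminus\{0\}$. First, I would fix a radius $R>0$ depending on $|\alpha|$ such that for $|\xi|\geq R$ the entire cube $\{\xi+t:t\in[0,|\alpha|]^n\}$ is contained in the half-space $\{|\eta|\geq |\xi|/2\}$, where $f$ is smooth and satisfies the homogeneity-type bound
\[
|\partial^\alpha f(\eta)|\leq C_\alpha|\eta|^{\nu-|\alpha|},
\]
which follows from the fact that $f$ is positively homogeneous of degree $\nu$ and smooth away from the origin (so $\partial^\alpha f$ is positively homogeneous of degree $\nu-|\alpha|$ and continuous on the unit sphere). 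Second, I would express the discrete differences in terms of classical derivatives using the fundamental theorem of calculus iteratively: since $\Delta_{\xi_j}\phi(\xi)=\int_0^1\partial_{\eta_j}\phi(\xi+t\delta_j)\,dt$ for any $C^1$ extension $\phi$, iterating this identity yields
\[
\Delta_\xi^\alpha f(\xi)=\int_{[0,1]^{|\alpha|}}\partial^\alpha f(\xi+t_1\delta_{j_1}+\cdots+t_{|\alpha|}\delta_{j_{|\alpha|}})\,dt_1\cdots dt_{|\alpha|},
\]
where the indices $j_k$ correspond to the coordinate directions determined by $\alpha$. Combined with the pointwise bound on $\partial^\alpha f$ inside the cube (where $|\xi+\sum t_k\delta_{j_k}|\asymp|\xi|\asymp\langle\xi\rangle$), this gives \eqref{goal:frac} for $|\xi|\geq R$.

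For the remaining case $|\xi|<R$, the set $\{\xi\in\zn:|\xi|<R\}$ is finite, so the combinatorial formula from Proposition \ref{combal},
\[
\Delta_\xi^\alpha p(\xi)=\sum_{\beta\leq\alpha}(-1)^{|\alpha-\beta|}\binom{\alpha}{\beta}p(\xi+\beta),
\]
produces only finitely many values to estimate; these are trivially bounded by a constant $C_\alpha'$, which in turn is dominated by $C_\alpha\langle\xi\rangle^{\nu-|\alpha|}$ on this finite set by enlarging the constant if necessary. Putting the two cases together yields \eqref{goal:frac} for all $\xi\in\zn$, and hence $p\in S_{1,0}^\nu(\tn\times\zn)$.

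The only mildly delicate point is handling the discrete differences near the origin, where $f$ fails to be smooth; the above split into $|\xi|\geq R$ and $|\xi|<R$ circumvents this issue cleanly, since the non-smoothness of $|\eta|^\nu$ at $\eta=0$ never enters the integral representation (which is applied only where the integration segment stays away from the origin), and the finite exceptional set is absorbed into the constant. This robustness is precisely the advantage of the toroidal calculus highlighted in Subsection \ref{discussion}: the symbol $|\xi|^\nu$ is a perfectly legitimate element of the discrete class $S_{1,0}^\nu(\tn\times\zn)$ for every $\nu>0$, in contrast to its Euclidean counterpart.
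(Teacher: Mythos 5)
Your argument is correct, but it takes a different route from the paper. The paper's proof is a two-line reduction: choose a cutoff $\chi\in C^\infty(\Rn)$ equal to $1$ for $\lvert\xi\rvert\geq 1$ and vanishing near the origin, note that $\widetilde\sigma(\xi)=\chi(\xi)\lvert\xi\rvert^\nu$ is a Euclidean symbol in $S^\nu_{1,0}(\tn\times\Rn)$, invoke the equivalence of Euclidean and toroidal symbol classes (Proposition \ref{prequiv}), and observe that $\widetilde\sigma$ agrees with $\lvert\xi\rvert^\nu$ at every lattice point since nonzero $\xi\in\zn$ have $\lvert\xi\rvert\geq 1$. You instead verify the discrete estimates $\lvert\Delta_\xi^\alpha p(\xi)\rvert\leq C_\alpha\jpxi^{\nu-\lvert\alpha\rvert}$ by hand, writing the iterated forward differences as integrals of $\partial^\alpha f$ over a unit cube anchored at $\xi$, using homogeneity of $\partial^\alpha f$ away from the origin, and absorbing the finitely many lattice points near the origin into the constant. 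This is in effect a self-contained proof of the only direction of Proposition \ref{prequiv} that the lemma needs, specialized to this symbol; it buys independence from the Ruzhansky--Turunen equivalence theorem at the cost of length, whereas the paper's cutoff argument is shorter and transfers verbatim to any symbol that is smooth and H\"ormander-type outside a compact set. Your radius $R\sim\lvert\alpha\rvert\sqrt n$ guaranteeing that the integration cube stays in $\{\lvert\eta\rvert\geq\lvert\xi\rvert/2\}$ is the right quantitative condition, and the treatment of the exceptional finite set is sound because $\jpxi^{\nu-\lvert\alpha\rvert}$ is bounded below there.
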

 \begin{proof} We should show that the function $\sigma:\zn\rightarrow\ar$ defined by $\sigma(\xi)=\lvert\xi\vert^{\nu}$ belongs to  $S_{1,0}^{\nu}(\mathbb{T}^n\times \zet^n)$. We choose a function $\chi\in C^{\infty}(\Rn)\,$ such that
 \begin{equation}
 \chi(\xi):=\left\{
\begin{array}{rl}
1,& \mbox{ if } \lvert\xi\rvert\geq 1,\\
0,& \mbox{ if } \lvert\xi\rvert\leq \half.
\end{array} \right.
\label{ht56e}\end{equation}
We observe that  $\widetilde{\sigma}(\xi):=\chi(\xi)\lvert\xi\rvert^{\nu}$ is smooth on $\Rn$ and  $\widetilde{\sigma}\in S_{1,0}^{\nu}(\mathbb{T}^n\times \Rn)$. Hence $\widetilde{\sigma}(x,D)\in \Op S_{1,0}^{\nu}(\mathbb{T}^n\times \Rn)$ and by  Proposition \ref{prequiv} we obtain that $\widetilde{\sigma}(x,D)\in \Op S_{1,0}^{\nu}(\mathbb{T}^n\times \zn)$. 
On the other hand we have that $\chi(\xi)\lvert\xi\rvert^{\nu}=\lvert\xi\rvert^{\nu}=\sigma(\xi)$ for all $\xi\in\zn$, and  therefore $\sigma\in S_{1,0}^{\nu}(\mathbb{T}^n\times \zet^n)$.
\end{proof}
\begin{remark}
The fractional Laplacian  $(-\Delta)^{\frac{\nu}{2}}$ is an element of the class  $\Op S_{1,0}^{\nu}(\mathbb{T}^n\times \zet^n)$ for any $\nu>0$. Then it is clear that it is also a  strongly elliptic operator of order $\nu$. This property is  an advantage over the classical case of the fractional Laplacian on $\arn$ where one can only refer to it  as a pseudo-differential operator when $\nu$ is an even integer. 
\end{remark}
In the following theorems we will write $K(t,x,\xi)\in S^{m}_{\ell\times\ell}(\tn\times\zn)$, which should be understood in the sense that for each $t\in\ar$ fixed, 
 $K(t,\cdot,\cdot)\in S^{m}_{\ell\times\ell;1, 0}(\tn\times\zn)$. We will also require of the  spaces  
 $$H^{+\infty}:=C^{\infty}( \tn)=\bigcap\limits_{s\in\ar}H^s \text{ and } H^{-\infty}:=\mathscr{D}'(\tn)=\bigcup\limits_{s\in\ar}H^s.$$
 
 The following theorem establishes a fundamental energy estimate is the toroidal version within the setting of the pseudo-differential calculus as introduced in \cite{rt:torus,rt:book}.
 
 \begin{theorem}\label{order1} Let $m>0$ and $K(t,x,\xi)\in S^m_{\ell\times\ell;\rho,\delta}(\tn\times\zn)$ depending smoothly on $t$. Assume that $K^*(t,x,D_x)+K(t,x,D_x)\in OpS^{0}_{\ell\times\ell;\rho,\delta}(\tn\times\zn)$, where for $t$ fixed, $K(t,x,D_x)=K(t)$ denotes the pseudo-differential operator corresponding to $K(t,x,\xi)$. Let $s\in\ar$, $T>0$. Let  
 $$v\in C([0,T], H^{s+1}(\tn;\mathbb{C}^{\ell}))\bigcap C^1([0,T], H^{s}(\tn;\mathbb{C}^{\ell}))$$ 
 and $Q:=\partial_t-K(t).$  
Then $v\in C^\infty(\tn,\mathbb{C}^{\ell})$ satisfies
\beq  \|v(t)\|_{H^s(\tn,\mathbb{C}^{\ell})}^2\leq e^{Ct}\left(\|v(0)\|_{H^s(\tn,\mathbb{C}^{\ell})}^2+\int\limits_0^t\|\omega(\tau)\|^2_{H ^s(\tn,\mathbb{C}^{\ell})}d\tau\right)\label{eineq:1a}  
\eq
for all $t\in [0,T].$ Moreover, we can replace $v(0)$ by $v(T)$ on the right-hand side of \eqref{eineq:1a}. The same conclusion holds
 for the operator $Q^*$.
\end{theorem}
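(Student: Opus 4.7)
The plan is to run a Grönwall-type argument for the energy $E(t) := \|v(t)\|_{H^s(\tn;\mathbb{C}^{\ell})}^2 = (\Lambda^{2s}v(t), v(t))_{L^2(\tn;\mathbb{C}^{\ell})}$. By the regularity hypothesis on $v$, this $E$ is $C^1$ on $[0,T]$, and substituting the equation $\partial_t v = K(t)v + \omega$ together with the self-adjointness of $\Lambda^{2s}$ yields
\[ E'(t) = ((\Lambda^{2s}K(t) + K^*(t)\Lambda^{2s})v, v)_{L^2} + 2\operatorname{Re}(\Lambda^{2s}\omega(t), v)_{L^2}. \]
The forcing term is dominated by $\|\omega(t)\|_{H^s}^2 + \|v(t)\|_{H^s}^2$ via Cauchy--Schwarz.

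The central task is to estimate $((\Lambda^{2s}K + K^*\Lambda^{2s})v, v)_{L^2}$. I would decompose
\[ \Lambda^{2s}K + K^*\Lambda^{2s} = \Lambda^s(K + K^*)\Lambda^s + \Lambda^s[\Lambda^s, K] + [K^*, \Lambda^s]\Lambda^s, \]
so that the first summand paired against $(\cdot, v)_{L^2}$ becomes $((K + K^*)\Lambda^s v, \Lambda^s v)_{L^2}$, which is bounded by $C\|v\|_{H^s}^2$ thanks to the hypothesis $K + K^* \in \Psi^0_{\ell\times\ell;\rho,\delta}(\tn\times\zn)$ and the $L^2$-boundedness given by Theorem~\ref{calculus:II}. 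For the two commutator remainders, the toroidal calculus and the fact that $\Lambda^s$ is a Fourier multiplier give the asymptotic expansion
\[ \sigma([\Lambda^s, K])(x,\xi) \sim \sum_{|\alpha|\geq 1}\Delta_\xi^\alpha \langle\xi\rangle^s\,\partial_x^\alpha k(x,\xi), \]
whose generic term lies in $S^{s + m - |\alpha|(1-\delta)}_{\ell\times\ell;\rho,\delta}(\tn\times\zn)$. Truncating the expansion at order $N$ large enough that the remainder falls in $\Psi^{2s}$, the tail is again controlled by $H^s$-duality; the finitely many intermediate higher-order terms are absorbed by exploiting the identity $E^* = E$ for the residual $E := \Lambda^s[\Lambda^s, K] + [K^*, \Lambda^s]\Lambda^s$ and re-running the same splitting iteratively.

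Combining the above yields the differential inequality $E'(t) \leq C E(t) + \|\omega(t)\|_{H^s}^2$, and Grönwall's lemma delivers the estimate \eqref{eineq:1a}. The replacement of $v(0)$ by $v(T)$, as well as the analogous statement for $Q^*$, follow by reversing time: $w(t) := v(T-t)$ satisfies $\partial_t w = \widetilde{K}(t)w + \widetilde{\omega}(t)$ with $\widetilde{K}(t) = -K(T-t)$ (respectively $K^*(T-t)$ for the $Q^*$ variant), and $\widetilde{K} + \widetilde{K}^* = -(K + K^*)(T-t) \in \Psi^0_{\ell\times\ell;\rho,\delta}$, so the same energy estimate applies to $w$ and transforms back to the desired bound for $v$.

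The main obstacle is the commutator estimate in the regime $m > 1 - \delta$, where the gain per application of $\Delta_\xi^\alpha$ in the $(\rho,\delta)$-calculus is only $1-\delta$; consequently $\Lambda^s[\Lambda^s, K]$ naively lies in $\Psi^{2s + m - (1-\delta)}_{\ell\times\ell;\rho,\delta}$, and straightforward $H^{s}$-to-$H^{-s}$ duality estimates $|(Ev,v)_{L^2}|$ only by $\|v\|_{H^{s+(m-(1-\delta))/2}}^2$, which exceeds $\|v\|_{H^s}^2$. The delicate point is therefore to iterate the asymptotic expansion from Theorem~\ref{calculus:II} sufficiently many times so that the residual has order $\leq 2s$, while using the algebraic constraint $K+K^*\in \Psi^0$ together with the self-adjoint structure of the intermediate operators to absorb all ``bad'' terms into $\|v\|_{H^s}^2$ rather than into a strictly higher Sobolev norm.
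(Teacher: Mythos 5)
Your proposal runs the same energy argument as the paper: differentiate $\|\Lambda^s v(t)\|_{L^2}^2$, use the hypothesis $K+K^*\in OpS^{0}_{\ell\times\ell;\rho,\delta}(\tn\times\zn)$ together with $L^2$-boundedness to control the symmetric part $((K+K^*)\Lambda^s v,\Lambda^s v)$, control the commutator with $\Lambda^s$, and conclude by Gronwall. Your algebraic splitting of $\Lambda^{2s}K+K^*\Lambda^{2s}$ is correct and equivalent to the paper's, and your time-reversal treatment of the $v(T)$ and $Q^*$ variants is a clean way to obtain what the paper dispatches with ``follows analogously''.

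The gap is exactly where you locate it, and you have not closed it. When $m>1-\delta$, the commutator $[\Lambda^s,K]$ only lies in $\Psi^{s+m-(1-\delta)}_{\ell\times\ell;\rho,\delta}(\tn\times\zn)$, and your remedy --- truncate the expansion and absorb the intermediate terms ``by exploiting $E^*=E$ and re-running the splitting iteratively'' --- is not an argument. Self-adjointness of $E$ only tells you that $(Ev,v)_{L^2}$ is real; to conclude $(Ev,v)_{L^2}\leq C\|v\|^2_{H^s}$ you need $E\leq C\Lambda^{2s}$ in the quadratic-form sense, and the leading symbol of $E$, essentially $2\langle\xi\rangle^{s}\sum_{|\alpha|=1}(\Delta_\xi^\alpha\langle\xi\rangle^{s})\,\partial_x^\alpha k(x,\xi)$ modulo contributions from $k+\overline{k}^{T}$, is of order $2s+m-(1-\delta)>2s$ and has no sign. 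Re-running the composition expansion does not by itself lower that order: what is actually needed is an order-by-order cancellation in the Hermitian part of the conjugated operator $\Lambda^{s}K\Lambda^{-s}+(\Lambda^{s}K\Lambda^{-s})^{*}$, each step consuming one more term of the symbolic expansion of the hypothesis $K+K^*\in\Psi^0$ and gaining only $\rho-\delta$; alternatively one needs a sharp G\aa rding-type input, or an exact symmetrizer built from $P$ itself (for Fourier multipliers such as $(-\Delta)^{\nu/2}$ the commutator vanishes identically and the issue disappears). You should also be aware that the paper's own proof is no more complete at this point: it asserts $[\Lambda^s,K(t)_{ij}]\in OpS^{s+1-1}_{\rho,\delta}=OpS^{s}_{\rho,\delta}$, a count that is only justified for $m\leq 1-\delta$ (the first-order case from which the argument is adapted). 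So your proposal faithfully reproduces the paper's proof in the regime $m\leq 1-\delta$, correctly flags the remaining regime, but leaves it unproved.
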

\begin{proof} For any $s\in \mathbb{R},$ let us write $H^s:=H^s(\tn,\mathbb{C}^{\ell}).$ We set $\Lambda^s$ be the Fourier multiplier given by $\Lambda(\xi)=(1+ \lvert\xi\rvert^2)^{\frac{s}{2}}$. Let us follow the notation in Remark \ref{rem:lambda}.   It is clear that $u$ belongs to $H^s(\tn,\mathbb{C}^{\ell})$ if and only if $\Lambda^su\in L^2(\tn,\mathbb{C}^{\ell})$. We assume that $v\in C([0,T], H^{s+1})\bigcap C^1([0,T], H^{s})$ and write $\omega=Qv$.
 Since $\partial_tv=(\partial_t-K(t))v+K(t)v=Qv+K(t)v=\omega +K(t)v$, we observe that
\begin{align}
\frac{d}{dt}\|v(t)\|_{H^s}^2 &=\frac{d}{dt}\langle\Lambda^sv,\Lambda^s v\rangle\nonumber\\ 
&=2Re \langle\Lambda^sv_t,\Lambda^s v\rangle\nonumber\\
&=2Re \langle\Lambda^s(K(t)v+\omega),\Lambda^s v\rangle\nonumber\\
&=2Re \langle\Lambda^sK(t)v,\Lambda^s v\rangle\nonumber\\
&-2Re \langle K(t)\Lambda^sv,\Lambda^s v\rangle\nonumber\\
&+2Re \langle K(t)\Lambda^sv,\Lambda^s v\rangle\nonumber\\
&+2Re \langle \Lambda^s\omega,\Lambda^s v\rangle\nonumber\\
&=2Re \langle [\Lambda^s,K(t)]v,\Lambda^s v\rangle\nonumber\\
&+2Re \langle K(t)\Lambda^sv,\Lambda^s v\rangle\label{ene2}\\
&+2Re \langle \Lambda^s\omega,\Lambda^s v\rangle.\nonumber
\end{align}
We note that the term (\ref{ene2}) can be written in the following way 
\begin{align*}
2Re \langle K(t)\Lambda^sv,\Lambda^s v\rangle &=\langle K(t)\Lambda^sv,\Lambda^s v\rangle + \overline{\langle K(t)\Lambda ^sv,\Lambda^s v\rangle}\\
&=\langle K(t)\Lambda^sv,\Lambda^s v\rangle+\langle\Lambda ^sv,K(t)\Lambda ^s v\rangle\\
&=\langle K(t)\Lambda^sv,\Lambda^s v\rangle+\langle K(t)^*\Lambda ^sv,\Lambda ^s v\rangle\\
&=\langle (K(t)+K(t)^*)\Lambda^sv,\Lambda^s v\rangle.
\end{align*}
Now,  we get 
 $A(t)= [\Lambda^s,K(t)]\in OpS^s_{\ell\times \ell;\rho,\delta}(\tn\times\zn)$. Indeed, observe that
 \[A(t)= [\Lambda^s,K(t)]=[A(t)_{ij}]_{i,j=1}^{\ell}\]
 where
 \begin{align*}
     A(t)_{ij}&=([\Lambda^s,K(t)])_{ij}\\
     &=[\Lambda^sK(t)]_{ij}-[K(t)\Lambda^s]_{ij}\\
     &=\sum_{k=1}^{\ell}\Lambda^s\delta_{ik}K(t)_{kj}- \sum_{k=1}^{\ell}K(t)_{ik}\Lambda^s\delta_{kj}\\
     &=\Lambda^sK(t)_{ij}-K(t)_{ij}\Lambda^s\\
     &=[\Lambda^s,K(t)_{ij}].
 \end{align*}The commutator properties of the scalar-valued calculus on the torus implies that any entry $[\Lambda^s,K(t)_{ij}]$ belong to the class $OpS^{s+1-1}_{\rho,\delta}(\tn\times\zn)=OpS^s_{\rho,\delta}(\tn\times\zn).$ Consequently, we have that $A(t)= [\Lambda^s,K(t)]\in OpS^s_{\ell\times \ell;\rho,\delta}(\tn\times\zn)$ as claimed.   
 Since we also have $K(t)+K(t)^*\in OpS^{0}_{\ell\times\ell;\rho,\delta}(\tn\times\zn)$, it follows that
\[\frac{d}{dt}\|v(t)\|_{H^s}^2 \leq\]
\begin{align*}
&\leq \|A(t)v\|_{L^2}\|v\|_{H^s}+C_1\|v\|^2_{H^s}+C_2\|\omega\|_{H^s}\|v\|_{H^s}\\
&\leq C\|v\|_{H^s}\|v\|_{H^s}+C_1\|v\|^2_{H^s}+C_2\|\omega\|_{H^s}\|v\|_{H ^s}\\
&\leq C\|v\|^2_{H^s}+C\|\omega\|^2_{H^s}.
\end{align*}
An application of the Gronwall inequality gives us the energy inequality
\beq\label{enineq}\|v(t)\|_{H^s}^2\leq e^{Ct}\left(\|v(0)\|_{H^s}^2+\int\limits_0^t\|\omega(\tau)\|^2_{H ^s}d\tau\right).\eq
 We can also prove an analogous estimate with $v(T)$ instead of $v(0)$ on the right-hand side of the inequality \eqref{enineq}.
  The conclusion for $Q^*$ follows analogously.
\end{proof}

We now obtain a consequence regarding the existence, uniqueness and regularity as an application of the above estimates.
\begin{theorem}\label{order1a} Let $m>0$ and $K(t,x,\xi)\in S^m_{\ell\times\ell}(\tn\times\zn)$ depending smoothly on $t$. Assume that $K^*(t,x,D_x)+K(t,x,D_x)\in OpS^{0}_{\ell\times\ell;\rho,\delta}(\tn\times\zn)$.  Let $s\in\ar$, $T>0$, $f\in H^s,\, \omega\in L^2([0,T], H^s)$. Then, there exists a unique
 $v\in C([0,T], H^s)$ such that
\begin{equation}
 \left\{
\begin{array}{rl}
{\displaystyle \partv} =&K(t)v+\omega, \,\, \mbox{ (in the sense of }\mathcal{D}'(]0,T[\times\Tn)\\
{\displaystyle v}(0)=&f.
\end{array} \right.
\label{probei2}\end{equation} 
  Moreover, the solution $v$ satisfies the energy estimate \eqref{eineq:1a}. If $\omega\in C^{\infty}([0,T],H^{+\infty})$ and $f\in H^{+\infty}$
   then $v\in C^{\infty}([0,T],H^{+\infty})$.
\end{theorem}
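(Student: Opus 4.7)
The plan is to combine the a priori energy estimate of Theorem \ref{order1} with a Galerkin (Fourier-truncation) approximation, and then bootstrap to $C^\infty$ regularity. For \emph{uniqueness}, if $v_1,v_2\in C([0,T],H^s)$ both satisfy \eqref{probei2}, then $w:=v_1-v_2$ solves $Qw=0$ with $w(0)=0$. Because $w$ is only at regularity $H^s$, Theorem \ref{order1} does not apply to $w$ directly; I would regularise by Friedrichs mollifiers $J_\varepsilon:=\chi(\varepsilon D)$ with $\chi\in C_{c}^{\infty}(\Rn)$, $\chi\equiv 1$ near $0$, observing that $w_\varepsilon:=J_\varepsilon w$ is smooth in $x$ and satisfies $\partial_t w_\varepsilon=K(t)w_\varepsilon+[J_\varepsilon,K(t)]w$ with $w_\varepsilon(0)=0$. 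Applying \eqref{eineq:1a} to $w_\varepsilon$ and letting $\varepsilon\to 0^+$, using that $[J_\varepsilon,K(t)]\to 0$ strongly on $H^s$, forces $w\equiv 0$.

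For \emph{existence}, let $\Pi_N$ denote the Fourier-truncation projector onto $\{\xi\in\zn:\lvert\xi\rvert\leq N\}$, applied component-wise on $\mathbb{C}^{\ell}$. I would solve the finite-dimensional linear ODE $\partial_t v_N=\Pi_N K(t)\Pi_N v_N+\Pi_N \omega$, $v_N(0)=\Pi_N f$ in $\Pi_N L^2(\tn,\mathbb{C}^{\ell})$; standard ODE theory produces a unique $v_N\in C^1([0,T],\Pi_N L^2)$, automatically smooth in $x$ since its Fourier support is finite. Running the energy identity that underlies Theorem \ref{order1} on $v_N$, I would exploit that $\Pi_N$ commutes with $\Lambda^s$ (both are Fourier multipliers), that $[\Lambda^s,\Pi_N K(t)\Pi_N]$ is of order $s$ uniformly in $N$, and that $\Pi_N(K+K^*)\Pi_N$ is $L^2$-bounded uniformly in $N$ by Theorem \ref{calculus:II}, to obtain the $N$-uniform bound
\beq
\|v_N(t)\|_{H^s}^2\leq e^{Ct}\Bigl(\|f\|_{H^s}^2+\int_0^t\|\omega(\tau)\|_{H^s}^2 d\tau\Bigr).
\eq
Weak-$*$ compactness in $L^\infty([0,T],H^s)$ then produces a limit $v$; an Aubin--Lions-type argument using the compact embedding $H^s\hookrightarrow H^{s-1}$ upgrades convergence to strong in $C([0,T],H^{s-1})$, which suffices to pass to the distributional limit in the equation. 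A standard weak-continuity-plus-norm-continuity argument promotes $v$ to $C([0,T],H^s)$, and the energy estimate \eqref{eineq:1a} is inherited by lower semicontinuity.

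For the \emph{smoothness} statement, I would invoke existence and uniqueness in every $H^s$ for smooth data, concluding $v\in\bigcap_{s\in\ar}C([0,T],H^s)=C([0,T],H^{+\infty})$. The equation then yields $\partial_t v=K(t)v+\omega\in C([0,T],H^{+\infty})$, so $v\in C^1$ in $t$ with values in $H^{+\infty}$; differentiating in $t$ (using the smooth $t$-dependence of $K(t)$) and iterating propagates smoothness to all orders, giving $v\in C^\infty([0,T],H^{+\infty})$.

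The main obstacle I anticipate is the Galerkin passage to the limit: the commutator and adjoint estimates from the proof of Theorem \ref{order1} must hold \emph{uniformly} in the truncation parameter $N$. Fortunately, $\Pi_N$ and $\Lambda^s$ are Fourier multipliers and therefore commute, while the uniform $L^2$-boundedness of $\Pi_N(K+K^*)\Pi_N$ follows cleanly from Theorem \ref{calculus:II}, so the scheme closes without further difficulty. A milder technical point is upgrading the weak limit from $L^\infty([0,T],H^s)$ to $C([0,T],H^s)$, which is handled by the standard combination of weak continuity and continuity of the $H^s$-norm derived from the energy identity.
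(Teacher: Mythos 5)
Your proposal is correct in substance but proves existence by a genuinely different route from the paper. The paper follows Chazarain--Piriou: it introduces the space $E=\{\varphi\in C^{\infty}([0,T],H^{-\infty}):\varphi(T)=0\}$, uses the \emph{backward} energy estimate for $Q^{*}$ at level $H^{-s}$ (this is why Theorem \ref{order1} records the variants with $v(T)$ and with $Q^{*}$) to show that the linear form $\beta(Q^{*}\varphi)=\int_0^T(\omega,\varphi)\,dt+\tfrac{1}{i}(f,\varphi(0))$ is well defined and continuous on $Q^{*}E\subset L^{2}([0,T],H^{-s})$, and then extends it by Hahn--Banach to produce a weak solution $v\in L^{2}([0,T],H^{s})$; the upgrade to $C([0,T],H^{s})$ and the energy estimate are then obtained exactly as in your last step, by approximating the data by smooth data and using \eqref{eineq:1a} on differences $v_j-v_k$. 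Your Galerkin scheme replaces the duality argument by a constructive one: the key observations that make it close --- $\Pi_N$ commutes with $\Lambda^{s}$, $[\Lambda^{s},\Pi_N K\Pi_N]=\Pi_N[\Lambda^{s},K]\Pi_N$ is bounded uniformly in $N$, and $(\Pi_NK\Pi_N)^{*}+\Pi_NK\Pi_N=\Pi_N(K+K^{*})\Pi_N$ is uniformly $L^{2}$-bounded --- are all sound, at the price of the extra compactness machinery (Aubin--Lions, weak-plus-norm continuity) that the duality approach avoids. What the paper's route buys is that existence falls out of the adjoint estimate with no passage to the limit in the equation; what yours buys is independence from Hahn--Banach and an explicit approximating sequence. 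On uniqueness you are in fact more careful than the paper (which simply invokes \eqref{eineq:1a}, even though a solution in $C([0,T],H^{s})$ does not meet the hypotheses of Theorem \ref{order1}); one small correction, though: since $[J_{\varepsilon},K(t)]$ is of order $m-1$, the claim that it tends to $0$ strongly \emph{on} $H^{s}$ is only accurate for $m\leq 1$; for general $m>0$ you should run the Friedrichs commutator argument at the level $H^{s-m}$ (where $w\in C([0,T],H^{s})\subset C([0,T],H^{s-m+1})$ gives exactly the regularity the commutator needs), which still yields $w\equiv 0$.
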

\begin{proof}
We will now prove the  existence of a solution $v$ of \eqref{probei2} in $C([0,T], H^s)$.  The proof is an adaptation of the corresponding part in the proof of Theorem 4.5 in \cite{chpi:book}. We write $Q=\frac{\partial}{\partial t}-K$ and we introduce the  space
 $E=\{\varphi\in C^{\infty}([0,T],H^{-\infty}): \varphi(T)=0\}$. We will see that we can define a linear form $\beta$ on $Q^*E$ by 
\[Q^*\varphi\rightarrow \beta(Q^*\varphi)=\int\limits_0^T(\omega(t,\cdot),\varphi(t,\cdot))dt+\frac{1}{i}(f,\varphi(0,\cdot)).\] 
We note that the energy estimate \eqref{eineq:1a} holds for $-s$ for the Cauchy problem \eqref{probei2} corresponding to the operator $Q^*$ with $\|v(T,\cdot)\|_{H^s}$ on the right hand side of \eqref{eineq:1a}. Thus, for $\varphi\in E$ we have
\[\|\varphi(t,\cdot)\|_{H^{-s}}^2\leq C\int\limits_0^T\|Q^*\varphi(t',\cdot)\|_{H^{-s}}^2dt' ,\,\,\,  t\in [0,T]\,\,,\]
 so that
$$\lvert\beta(Q^*\varphi)\rvert^2\leq C'\int\limits_0^T\|Q^*\varphi(t',\cdot)\|_{H^{-s}}^2dt'.$$
We deduce that $\beta$ is well defined and continuous with respect to the topology induced on $Q^*E$ by $L^2([0,T,], H^{-s})$. An application of the Hahn-Banach theorem implies the existence of an element $v\in (L^2([0,T,], H^{-s})'=L^2([0,T,], H^{s})$ such that
\beq (v,Q^*\varphi)=\int\limits_0^T(\omega(t,\cdot),\varphi(t,\cdot))dt+\frac{1}{i}(f,\varphi(0,\cdot))\label{exv}\eq
for all $\varphi\in E$. In particular, if $\varphi\in C^{\infty}(]0,T[\times\Tn)$, \eqref{exv} implies that $Qv=\omega$ in $\mathcal{D}'([0,T]\times\Tn))$. Thus
$\frac{\partial}{\partial t}v=Kv+\omega\in L^2([0,T,], H^{s-1}) $. An integration by parts with respect to $t$ in \eqref{exv} implies that
$(v(0,\cdot),\varphi(0,\cdot))=(f,\varphi(0,\cdot))$ for all $\varphi\in E$ and
consequently $v(0)=v(0,\cdot)=f$.   

Now, if $\omega\in C^{\infty}([0,T],H^{+\infty})$ and $f\in H^{+\infty}$, the above argument shows that $v\in C([0,T],H^{+\infty})$. Moreover, since $\frac{\partial}{\partial t}v=Kv+\omega$, one can deduce step by step that $v\in C^k([0,T],H^{+\infty})$ for all $k\geq 0$. Consequently $v\in C^{\infty}([0,T],H^{+\infty})$.

We will now prove that $v\in C([0,T],H^s)$ and that it satisfies the energy estimate \eqref{eineq:1a}. 
 Suppose we have sequences $(\omega_j)$ in $C^{\infty}([0,T]\times\Tn)$ and $(f_j)$ in $C_0^{\infty}(\Tn)$ such that $\omega_j\rightarrow \omega$ in $L^2([0,T,], H^{s})$ and $f_j\rightarrow f$ in  $H^{s}$. Let 
$v_j\in C^{\infty}([0,T],H^{+\infty})$ be the solution of $Qv_j=\omega_j, v_j(0,\cdot)=f_j$. The inequality \eqref{eineq:1a} applied to the $v_j-v_k$ shows that
 $v_j$ is a Cauchy sequence in $C([0,T],H^{s})$ so that $v_j\rightarrow\widetilde{v}$ in $C([0,T],H^{s})$. In the limit, we have $Q\widetilde{v}=\omega,\, \widetilde{v}(0,\cdot)=f$;  consequently, the uniqueness shows that $\widetilde{v}=v$. 

The corresponding inequality \eqref{eineq:1a} for $v$ is obtained passing to the limit in this inequality applied to $v_j$. In this way we conclude the proof of the Theorem. The uniqueness of the solution $v$ follows from the energy inequality \eqref{eineq:1a}. 
\end{proof}

We can now establish the well-posedness for the Cauchy problem \eqref{probei2gax} as a consequence of Theorem \ref{order1a}. We are ready to prove our main theorem.
\begin{proof}[Proof of Theorem \ref{main2:2}]
In order to apply  Theorem \ref{order1a}, we define  first  $$A=(I+P)^{\frac{1}{2}}.$$ Observe that
\begin{equation}\label{deco:mat}
    \partt  \underbrace{\partu}_{\rm v_2}=-P(x,D)A^{-1}\underbrace{Au}_{\rm v_1} ,
\end{equation}
with
$v_1=Au,\, v_2=\partu $.
In view of the functional calculus for elliptic operators we have that $A\in OpS^{\frac{\nu}{2}}(\Tn\times\zn).$ Hence, we have the matrix identity
\[
\partt\left[ {\begin{array}{c}
 v_1 \\
v_2\\
 \end{array} } \right]=\left[ {\begin{array}{cc}
 0 & A \\
-P(x,D)A^{-1} & 0 \\
 \end{array} } \right]\left[ {\begin{array}{c}
 v_1 \\
v_2\\
 \end{array} } \right]+\left[ {\begin{array}{c}
 0 \\
w\\
 \end{array} } \right].
\]
Indeed, the identity in the first component is reduced to the fact that
\begin{align*}
   \partt v_1=\partt Au= A\partt u=av_2,
\end{align*}where we have used that $A$ is independent of $t,$ and then the operators $A$ and $\partt$ commute. On the other hand, the identity in the second component is consequence of the fact that $\partt v_2=u_{tt},$ and the decomposition  in \eqref{deco:mat}.

It is clear that $K(t)\in OpS^{\frac{\nu}{2}}_{2\times2,\rho,\delta}(\Tn\times\zn)$, where $K(t)$ is the constant in $t,$ $2\times2$ matrix-valued operator:
\[
K(t) =
\left[ {\begin{array}{cc}
 0 & A \\
-P(x,D) A^{-1} & 0 \\
 \end{array} } \right]\,.
\]
On the other hand, observe that $K(t)+K(t)^*\in OpS_{2\times2}^{0}(\Tn\times\zn)$. 
To prove this, let us compute $K(t)^*$ as follows
\begin{align*}
  K(t)^* =
\left[ {\begin{array}{cc}
 0 & (-P(x,D) A^{-1})^* \\
A & 0 \\
 \end{array} } \right]\,.  
\end{align*}Since the operator $A^{-1}$ commutes with  $P$, we have that
$$ (-P(x,D) A^{-1})^*=-P(x,D) A^{-1},$$
in view of the positivity of $P(x,D).$ Note also that
\[ K(t)+K(t)^* =
\left[ {\begin{array}{cc}
 0 & A-P(x,D) A^{-1} \\
A-P(x,D) A^{-1} & 0 \\
 \end{array} } \right].\,\]
Indeed
\[ A^*A=I+P(x,D),\]
and 
\[ A^*=A^{-1}+P(x,D)A^{-1} =P(x,D)A^{-1}+R,\]
with 
$$R=A^{-1}\in OpS^{-\frac{\nu}{2}}_{\rho,\delta}(\tn\times\zn)\subset OpS^{0}_{\rho,\delta}(\tn\times\zn) .$$
Since $A$ is positive, we have that 
 \[A-P(x,D)A^{-1}=A^*-P(x,D)A^{-1}\in OpS^0(\Tn\times\zn),\]
 and so, we have proved that $K(t)+K(t)^*\in OpS_{2\times2;\rho,\delta}^{0}(\Tn\times\zn)$. Now, since $Af_0\in H^{s-\frac{\nu}{2}}$ and applying Theorem \ref{order1a} to 
\beq v(0)=f=\left[ {\begin{array}{c}
 Af_0 \\
f_1\\
 \end{array} } \right]\in H^{s-\frac{\nu}{2}},\,\,\omega=\left[ {\begin{array}{c}
 0 \\
w\\
 \end{array} } \right]\in L^2([0,T], H^{s-\frac{\nu}{2}}),
\label{eq:data1w}\eq
we obtain that $v\in C([0,T],H^{s-\frac{\nu}{2}})$. Since $u=A^{-1}v_1\in H^{s} $ we deduce that $u\in C([0,T],H^{s})$.
 The uniqueness of $u$ follows from the uniqueness of $v$ and the invertibility of $A$ since $u=A^{-1}v_1$. 
 The inequality \eqref{einw1:1c} is an immediate consequence of the inequality \eqref{eineq:1a} applied to the data \eqref{eq:data1w}. The last conclusion of the theorem also follows from the analogous part of Theorem \ref{order1a}.
\end{proof}
Observe that in view of Theorem \ref{main2:2} applied to the fractional Laplacian $P(x,D)=(-\Delta)^{\frac{\nu}{2}}$ we have the following energy estimate.

\begin{cor}\label{main2}  Let $\nu>0$, $T>0$. If $s\in\ar$, $f_0\in H^s,\,f_1\in H^{s-\frac{\nu}{2}}, \,  \omega\in L^2([0,T], H^{s-\frac{\nu}{2}})$. Then, there exists a unique
 solution $u\in C([0,T], H^s)$ of the Cauchy problem
 \begin{equation}
 \left\{
\begin{array}{rl}
{\displaystyle\partuu }\,\,\,\,=& -(-\Delta)^{\frac{\nu}{2}}u+w, \,\, \mbox{ (in the sense of }\mathcal{D}'(]0,T[\times\Tn))\\
{\displaystyle u}(0)=&f_0,\\
{\displaystyle\partu}(0)=& f_1.
\end{array} \right.
\label{probgg2}\end{equation}  
Moreover, the solution $u$ satisfies the following energy estimate 
\begin{equation}\label{einw1:1c}
    \Vert u(t)\Vert_{H^s}^2\leq Ce^{Ct}\left(\Vert f_0\Vert_{H^s}^2+\Vert f_1\Vert_{H^{s-\frac{\nu}{2}} }^2+\int\limits_0^t\Vert w(\tau)\Vert^2_{H^{s-\frac{\nu}{2}}}d\tau\right).
\end{equation}
Moreover, if $w\in C^{\infty}([0,T],H^{+\infty})$ and $f\in H^{+\infty}$
   then $u\in C^{\infty}([0,T],H^{+\infty})$.
\end{cor}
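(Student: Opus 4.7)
The corollary is a direct specialization of Theorem \ref{main2:2} to the operator $P(x,D)=(-\Delta)^{\nu/2}$, so my plan is to verify that this specific operator satisfies all the hypotheses of Theorem \ref{main2:2} with parameters $\rho=1$, $\delta=0$, and then invoke Theorem \ref{main2:2} to conclude.

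First, I would check that $(-\Delta)^{\nu/2}\in\Psi^{\nu}_{1,0}(\tn\times\zn)$. This is precisely the content of Lemma \ref{flle}, which says that the symbol $\sigma(\xi)=|\xi|^\nu$ agrees on $\zn$ with the smooth function $\widetilde{\sigma}(\xi)=\chi(\xi)|\xi|^\nu \in S^\nu_{1,0}(\tn\times\ar^n)$, so by Proposition \ref{prequiv} the fractional Laplacian lies in the toroidal class $\Psi^\nu_{1,0}(\tn\times\zn)$. Note that $\rho=1$ and $\delta=0$ clearly satisfy the constraint $0\leq\delta<\rho\leq 1$ of Theorem \ref{main2:2}.

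Next, I need ellipticity and positivity. The symbol $(2\pi)^\nu|\xi|^\nu$ is nonnegative for all $\xi\in\zn$ and, viewed as a Fourier multiplier, defines a positive self-adjoint operator on $L^2(\tn)$ via its action on the orthonormal basis $\{e^{2\pi ix\cdot\xi}\}_{\xi\in\zn}$. For ellipticity, we have $(2\pi)^\nu|\xi|^\nu \geq C_0\langle\xi\rangle^\nu$ whenever $|\xi|\geq 1$ for some constant $C_0>0$, since $|\xi|$ and $\langle\xi\rangle$ are comparable for large $\xi$. Hence $(-\Delta)^{\nu/2}$ is a positive elliptic pseudo-differential operator of order $\nu$ in $\Psi^\nu_{1,0}(\tn\times\zn)$.

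With all hypotheses of Theorem \ref{main2:2} verified, an immediate application yields the existence and uniqueness of $u\in C([0,T],H^s(\tn))$ solving \eqref{probgg2}, together with the energy estimate \eqref{einw1:1c} and the smoothness conclusion when the data are smooth. There is no genuine obstacle here, since the work has already been done in Lemma \ref{flle} and in Theorem \ref{main2:2}; the only care required is to note that the torus case of $(-\Delta)^{\nu/2}$ genuinely sits inside the toroidal H\"ormander calculus for every $\nu>0$, which contrasts with the Euclidean situation described in Subsection \ref{discussion} and is precisely what makes the reduction work.
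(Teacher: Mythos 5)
Your proposal is correct and follows exactly the paper's route: the corollary is obtained by specializing Theorem \ref{main2:2} to $P(x,D)=(-\Delta)^{\nu/2}$, using Lemma \ref{flle} (together with Proposition \ref{prequiv}) to place the fractional Laplacian in $\Psi^\nu_{1,0}(\tn\times\zn)$ and noting its positivity and ellipticity. The paper leaves these verifications implicit, so your explicit check of the hypotheses is if anything slightly more complete than the text itself.
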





\end{document}